\documentclass[12pt,a4paper]{amsart}
\usepackage{amsfonts}
\usepackage{amsthm}
\usepackage{amsmath}
\usepackage{amscd}
\usepackage[latin2]{inputenc}
\usepackage{t1enc}
\usepackage[mathscr]{eucal}
\usepackage{indentfirst}
\usepackage{graphicx}
\usepackage{graphics}
\usepackage{pict2e}
\usepackage{epic}
\numberwithin{equation}{section}
\usepackage[margin=2.9cm]{geometry}

\theoremstyle{plain}
\newtheorem{Th}{Theorem}[section]
\newtheorem{Lemma}[Th]{Lemma}
\newtheorem{Cor}[Th]{Corollary}
\newtheorem{Prop}[Th]{Proposition}

 \theoremstyle{definition}
\newtheorem{Def}[Th]{Definition}

\newtheorem{Rem}[Th]{Remark}
\newtheorem{?}[Th]{Problem}
\newtheorem{Ex}[Th]{Example}

\newcommand{\G}{\mathbb{G}}
\newcommand{\prm}{\mathrm{pm}}

\begin{document}

\title{Matchings in vertex-transitive bipartite graphs}

\author[P. Csikv\'ari]{P\'{e}ter Csikv\'{a}ri}

\address{Massachusetts Institute of Technology \\ Department of Mathematics \\
Cambridge MA 02139 \&  E\"{o}tv\"{o}s Lor\'{a}nd University \\ Department of Computer 
Science \\ H-1117 Budapest
\\ P\'{a}zm\'{a}ny P\'{e}ter s\'{e}t\'{a}ny 1/C \\ Hungary} 

\email{peter.csikvari@gmail.com}

\thanks{The author  is partially supported by the
Hungarian National Foundation for Scientific Research (OTKA), grant
no. K81310 and K109684.}

 \subjclass[2010]{Primary: 05C35. Secondary: 05C31, 05C70, 05C80}

 \keywords{Matchings, matching polynomial, Benjamini--Schramm
   convergence, infinite regular tree}

\begin{abstract} A theorem of A. Schrijver asserts that a $d$--regular bipartite graph on $2n$ vertices has at least 
$$\left(\frac{(d-1)^{d-1}}{d^{d-2}}\right)^n$$
perfect matchings. L. Gurvits gave an extension of Schrijver's theorem for matchings of density $p$. In this paper we give a stronger version of Gurvits's theorem in the case of vertex-transitive bipartite graphs. This stronger version in particular implies that for every positive integer $k$, there exists a positive constant $c(k)$ such that if a $d$-regular vertex-transitive bipartite graph on $2n$ vertices contains a cycle of length at most $k$, then it has at least 
$$\left(\frac{(d-1)^{d-1}}{d^{d-2}}+c(k)\right)^n$$
perfect matchings.

We also show that if $(G_i)$ is a Benjamini--Schramm convergent graph sequence of vertex-transitive bipartite graphs, then
$$\frac{\ln \prm(G_i)}{v(G_i)}$$
is convergent, where $\prm(G)$ and $v(G)$ denote the number of perfect matchings and the number of vertices of $G$, respectively. 

We also show that if $G$ is  $d$--regular vertex-transitive bipartite graph on $2n$ vertices and $m_k(G)$ denotes the number of matchings of size $k$, and 
$$M(G,t)=1+m_1(G)t+m_2(G)t^2+\dots +m_n(G)t^n=\prod_{k=1}^n(1+\gamma_k(G)t),$$
where $\gamma_1(G)\leq \dots \leq \gamma_n(G)$, then
$$\gamma_k(G)\geq \frac{d^2}{4(d-1)}\frac{k^2}{n^2},$$
and
$$\frac{m_{n-1}(G)}{m_n(G)}\leq \frac{2}{d}n^2.$$
The latter result improves on a previous bound of C. Kenyon, D. Randall and A. Sinclair.
There are examples of $d$--regular bipartite graphs for which these statements fail to be true without the condition of vertex-transitivity.
\end{abstract}

\maketitle

\section{Introduction} 

This paper is motivated by two seemingly independent sets of results on perfect matchings of finite graphs.  The first set of results concerns with extremal values of the number of (perfect) matchings, most notably results of A. Schrijver and L. Gurvits stand as cornerstones. The second set of results deals with a convergent graph sequence $(G_i)$, and the 
$$\lim_{i\to\infty}\frac{\ln \prm(G_i)}{v(G_i)},$$
where $\prm(G)$ and $v(G)$ denote the number of perfect matchings, and the number of vertices of the graph $G$, respectively. Here the main question is that what kind of conditions we have to impose to the graphs $(G_i)$ and to the convergence in order to ensure the existence of the above limit. 

The remaining part of the Introduction is split into two parts according to the two topics. We note here that we use standard terminology, but in case of a concept undefined in the Introduction, the first paragraph of Section~\ref{entropy-function} might help.

\subsection{Extremal problems about the number of matchings in bipartite graphs.}
Here the starting point is the following theorem of A. Schrijver.

\begin{Th}[A. Schrijver \cite{sch1}, for $d=3$ M. Voorhoeve \cite{vor}] \label{Schrijver} Let $G$ be a $d$--regular bipartite graph on $2n$ vertices, and let $\prm(G)$ denote the number of perfect matchings of $G$. Then
$$\prm(G)\geq \left(\frac{(d-1)^{d-1}}{d^{d-2}}\right)^n.$$
\end{Th}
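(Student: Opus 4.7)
I would prove Schrijver's bound by the real stable polynomial / capacity method of Gurvits, which is the cleanest modern route and which fits naturally with the rest of the paper. To the $d$-regular bipartite graph $G=(U\sqcup V,E)$ with $|U|=|V|=n$ I associate the matching polynomial
$$p_G(z_1,\dots,z_n)\;=\;\prod_{u\in U}\sum_{v\in N(u)} z_v,$$
whose variables are indexed by $V$. Being a product of linear forms with nonnegative coefficients, $p_G$ is real stable; it is homogeneous of degree $n$; and by $d$-regularity every variable has individual degree exactly $d$. Crucially, the coefficient of $z_1z_2\cdots z_n$ in $p_G$ equals $\prm(G)$.

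Next I introduce the capacity
$$\mathrm{cap}(p_G)\;=\;\inf_{z_1,\dots,z_n>0}\frac{p_G(z)}{z_1z_2\cdots z_n},$$
and show $\mathrm{cap}(p_G)=d^n$. In logarithmic coordinates $z_v=e^{x_v}$ the objective is convex, so a critical point is a global minimum; $d$-regularity forces the all-ones vector to be such a critical point, giving $\mathrm{cap}(p_G)=p_G(1,\dots,1)=d^n$.

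The main ingredient is the Gurvits--Schrijver capacity inequality: for any homogeneous real stable polynomial $p$ of degree $n$ in $n$ variables with nonnegative coefficients and maximum individual degree at most $d$,
$$[z_1\cdots z_n]\,p\;\geq\;\left(\frac{(d-1)^{d-1}}{d^{d-1}}\right)^{n}\mathrm{cap}(p).$$
Applied to $p_G$ this gives $\prm(G)\ \geq\ d^{n}\cdot\bigl((d-1)^{d-1}/d^{d-1}\bigr)^{n}=\bigl((d-1)^{d-1}/d^{d-2}\bigr)^{n}$, which is exactly Schrijver's bound.

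The main obstacle is the Gurvits--Schrijver inequality itself. I would prove it by induction on $n$, peeling off one variable at a time: letting $q(z_1,\dots,z_{n-1})=(\partial_{z_n}p)\big|_{z_n=0}$, one observes that $[z_1\cdots z_{n-1}]q=[z_1\cdots z_n]p$ and, by the Borcea--Br\"and\'en characterization of stability-preserving operators, that $q$ is again real stable with the same individual-degree bound. The capacity loss per step is controlled by a sharp \emph{univariate} inequality: for any polynomial $f(z)=\sum_{k=0}^{d}c_kz^k$ with nonnegative coefficients and only real roots, $c_1\geq ((d-1)/d)^{d-1}\inf_{z>0}f(z)/z$, with equality (up to a scalar) attained by $f(z)=(z+\alpha)^{d}$. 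Establishing this univariate extremal inequality, verifying the stability-preservation step, and tracking the constants cleanly through the induction are the delicate points; everything else is routine.
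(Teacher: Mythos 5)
This theorem is quoted in the paper as background (attributed to Schrijver and, for $d=3$, Voorhoeve) and is not proved there, so there is no in-paper argument to compare against; I can only assess your proposal on its own terms. What you outline is, in essence, Gurvits's capacity proof of Schrijver's bound --- exactly the content of the cited works \cite{gur} and \cite{L-S} --- and the outline is correct. The polynomial $p_G=\prod_{u}\sum_{v\in N(u)}z_v$ is indeed H-stable with $[z_1\cdots z_n]p_G=\prm(G)$ and individual degrees equal to $d$; the all-ones point is a critical point of the convex function $\log p_G(e^x)-\sum_v x_v$ precisely because each $v$ lies in exactly $d$ of the linear factors, giving $\mathrm{cap}(p_G)=d^n$; and the constants combine as $d^n\cdot\bigl((d-1)^{d-1}/d^{d-1}\bigr)^n=\bigl((d-1)^{d-1}/d^{d-2}\bigr)^n$. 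Your univariate lemma is the correct key step (with equality checked for $(z+\alpha)^d$ at $z=\alpha/(d-1)$), and since $g(k)=((k-1)/k)^{k-1}$ is decreasing in $k$, using the uniform factor $g(d)$ at every peeling step is legitimate even though the sharp form of Gurvits's inequality uses $g(\min(i,d))$ at step $i$. The points you flag as delicate (real-rootedness of the restriction $t\mapsto p(z',t)$ for $z'>0$, stability of $\partial_{z_n}p|_{z_n=0}$, and passing from the pointwise univariate bound to $\mathrm{cap}(q)\geq g(d)\,\mathrm{cap}(p)$ by taking infima) are exactly the right ones and are all standard. This route is genuinely different from Schrijver's original proof, which is an intricate combinatorial induction on edge-weighted regular bipartite graphs with no stability theory; the capacity approach buys a conceptually transparent, modular argument (and simultaneously yields the van der Waerden bound), at the price of importing the real-stability machinery. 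It also meshes well with this paper, which works throughout with Gurvits-type bounds and the function $\G_d(p)$; note that \cite{csi} gives yet another proof of both theorems in the spirit of the matching-polynomial techniques used here.
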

Note that Schrijver and Valiant proved in \cite{sch2} that the number
$$\frac{(d-1)^{d-1}}{d^{d-2}}$$
cannot be improved by showing that for a random $d$--regular bipartite
multigraph the statement is asymptotically tight. In \cite{ACFK} the authors proved that actually large girth graphs (not only random graphs) have asymptotically the same number of perfect matchings: let $g(H)$ denote the girth of a graph $H$, i. e., the length of the shortest cycle in $H$.
Then the following is true.

\begin{Th}[\cite{ACFK}] \label{entropy-girth} Let $(G_i)$ be a sequence of $d$--regular bipartite graphs such that $g(G_i)\to \infty$, where $g(G_i)$ denotes the girth of 
$G_i$. Then
$$\lim_{i\to \infty}\frac{\ln \prm(G_i)}{v(G_i)}=\frac{1}{2}\ln \left(
\frac{(d-1)^{d-1}}{d^{d-2}}\right).$$ 
\end{Th}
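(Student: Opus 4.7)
The lower bound is immediate from Theorem~\ref{Schrijver}: writing $v(G_i)=2n_i$, Schrijver's inequality gives $\ln\prm(G_i)/v(G_i)\geq\tfrac{1}{2}\ln\bigl((d-1)^{d-1}/d^{d-2}\bigr)$ for every $i$, so the whole problem is to establish the corresponding asymptotic upper bound.

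My plan is to pass to the matching polynomial. For a $d$-regular bipartite graph $G$ on $2n$ vertices, the polynomial $\mu_G$ has all $2n$ roots real and contained in $[-2\sqrt{d-1},2\sqrt{d-1}]$ by Heilmann--Lieb, and $\mu_G(0)=(-1)^n\prm(G)$. Writing $\mu_G(x)=\prod_{j=1}^{2n}(x-\theta_j)$ and introducing the matching measure $\rho_G:=\frac{1}{2n}\sum_j\delta_{\theta_j}$, one obtains the identity
\[
\frac{\ln\prm(G)}{v(G)}=\int\ln|x|\,d\rho_G(x).
\]
The hypothesis $g(G_i)\to\infty$ forces $(G_i)$ to Benjamini--Schramm-converge to the $d$-regular infinite tree $T_d$. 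Combined with the combinatorial interpretation of the coefficients of $\mu_G$ as counting local tree-like structures (so that the moments of $\rho_G$ are local quantities of bounded radius), a method-of-moments argument then yields weak convergence $\rho_{G_i}\Rightarrow\rho_{T_d}$, where $\rho_{T_d}$ is the Kesten--McKay measure with density $\tfrac{d}{2\pi}\,\tfrac{\sqrt{4(d-1)-x^2}}{d^2-x^2}$ on $[-2\sqrt{d-1},2\sqrt{d-1}]$.

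The main obstacle is that $\ln|x|$ is unbounded near $0$, so weak convergence alone does not imply convergence of $\int\ln|x|\,d\rho_{G_i}$. However, only the upper-semicontinuity direction is needed, and it can be handled by truncation: for $\varepsilon>0$ the function $f_\varepsilon(x):=\max(\ln|x|,\ln\varepsilon)$ is bounded and continuous, hence $\int\ln|x|\,d\rho_{G_i}\leq\int f_\varepsilon\,d\rho_{G_i}\to\int f_\varepsilon\,d\rho_{T_d}$. Since $\rho_{T_d}$ has a density bounded near $0$, $\ln|x|$ is $\rho_{T_d}$-integrable and $\int f_\varepsilon\,d\rho_{T_d}\downarrow\int\ln|x|\,d\rho_{T_d}$ as $\varepsilon\to 0$, giving
\[
\limsup_i \frac{\ln\prm(G_i)}{v(G_i)}\leq\int\ln|x|\,d\rho_{T_d}.
\]
A direct evaluation of this last integral (e.g., via $x=2\sqrt{d-1}\cos\theta$, reducing to standard log-of-trigonometric integrals of Jensen type) produces exactly $\tfrac{1}{2}\ln\bigl((d-1)^{d-1}/d^{d-2}\bigr)$, matching the Schrijver lower bound and yielding the claimed limit.
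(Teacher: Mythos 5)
The paper does not prove Theorem~\ref{entropy-girth} itself but quotes it from \cite{ACFK}, and your argument is precisely the one used there and echoed in this paper's machinery: Schrijver's bound for the lower direction, weak convergence of the matching measures $\rho_{G_i}$ to the Kesten--McKay measure $\rho_{\mathbb{T}_d}$ (Theorem~\ref{wc} together with the girth hypothesis), a truncation/upper-semicontinuity argument for $\ln|x|$ near $0$, and the classical evaluation $\int\ln|x|\,d\rho_{\mathbb{T}_d}=\tfrac{1}{2}\ln\bigl((d-1)^{d-1}/d^{d-2}\bigr)$. This is correct and is essentially the same proof as in the cited source (and parallels the paper's own proof of Theorem~\ref{pm-entropy}, which replaces the truncation step by a uniform-integrability estimate).
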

L. Gurvits gave an extension of Schrijver's theorem for matchings of size $k$: 

\begin{Th}[Gurvits \cite{gur2}] Let $G$ be an arbitrary $d$--regular bipartite graph on $v(G)=2n$ vertices. Let $m_k(G)$ denote the number of $k$--matchings.  Let $p=\frac{k}{n}$.
Then
$$\frac{\ln m_k(G)}{v(G)}\geq \frac{1}{2}\left(p\ln \left(\frac{d}{p}\right)+(d-p)\ln \left(1-\frac{p}{d}\right)-2(1-p)\ln (1-p)\right)+o_{v(G)}(1).$$
\end{Th}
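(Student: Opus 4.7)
My approach would follow Gurvits's framework based on real stable polynomials and the capacity method.

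First, I would encode the matching statistics of the $d$-regular bipartite graph $G = (A \cup B, E)$ (with $|A|=|B|=n$) in a stable polynomial. Consider
$$Q_G(z_1,\ldots,z_n, w) = \prod_{b \in B}\Bigl(w + \sum_{a \in N(b)} z_a\Bigr),$$
which is real stable as a product of linear forms with non-negative coefficients. Direct expansion shows that $[w^{n-k}\prod_{a\in S} z_a]\, Q_G$ counts the matchings of $G$ that cover exactly $S \subseteq A$ on the $A$-side, so summing over $|S|=k$ recovers $m_k(G)$.

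Next, I would invoke Gurvits's lower bound on coefficients of real stable polynomials: for $P$ stable of per-variable degree $d_i$, the coefficient $[x^\alpha] P$ is bounded below by the capacity $\mathrm{Cap}_\alpha(P) = \inf_{x > 0} P(x)/x^\alpha$ times an explicit combinatorial factor involving $(d_i - \alpha_i)^{d_i - \alpha_i} / d_i^{d_i}$ per variable. For $Q_G$ with the exponent vector corresponding to a $k$-matching (exponent $1$ on each $z_a$ with $a \in S$, exponent $0$ on the others, exponent $n-k$ on $w$), the capacity is computed using $d$-regularity: by convexity in logarithmic coordinates and the permutation symmetry of the $A$-side, the infimum reduces to the two-variable optimization $\inf_{z, w > 0} (w + dz)^n / (z^k w^{n-k})$, whose minimizer satisfies $dz^*/(w^* + dz^*) = p$.

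Summing the resulting per-$S$ lower bound over the $\binom{n}{k}$ subsets $S \subseteq A$ of size $k$ and applying Stirling's approximation produces the right-hand side of Gurvits's inequality. The main obstacle is the careful combinatorial bookkeeping matching Gurvits's loss factors against the binomial contributions: in particular, the factor of $2$ in front of $(1-p)\ln(1-p)$ must arise from combining the binomial summation over $S$ with the Gurvits factor on the $w$-variable (which has degree $n$ and exponent $n-k$). The $o_{v(G)}(1)$ error absorbs the Stirling approximation errors.
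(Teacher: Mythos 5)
First, a point of orientation: the paper does not prove this statement at all --- it is quoted from Gurvits \cite{gur2} and used as a black box. The only related argument the paper supplies is the proof of Theorem~\ref{stability} in Section~\ref{gurvits}, which establishes the stronger inequality $\lambda_G(p)\geq \G_d(p)$ for \emph{vertex-transitive} bipartite $d$--regular graphs by an entirely different, elementary route: differentiate the gap function to get $g'(p)=\frac{1}{2}\ln\bigl(\frac{p}{d}(1-\frac{p}{d})(1-p)^{-2}t^{-1}\bigr)$ and show $g'(p)\geq 0$ via the Heilmann--Lieb path identity (part (c) of Lemma~\ref{identities}) together with vertex-transitivity. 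So your proposal has to stand on its own as a proof of Gurvits's theorem for \emph{general} $d$--regular bipartite graphs, where no such symmetry is available.

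On its own terms, the stable-polynomial/capacity framework is a legitimate route and your encoding is correct: the coefficient of $w^{n-k}\prod_{a\in S}z_a$ in $Q_G$ does count the matchings covering $S$ on the $A$-side. The gap is in the capacity step. A general $d$--regular bipartite graph has no ``permutation symmetry of the $A$-side'', so the infimum defining $\mathrm{Cap}_\alpha(Q_G)$ does not reduce to a two-variable problem; restricting the infimum to symmetric points $z_a\equiv z$ only yields an \emph{upper} bound on the capacity, which is useless for your purpose. The standard substitute is a weighted AM--GM bound on each factor, with weights chosen so that every $z_a$, $a\in S$, accumulates total exponent $1$ across the product; $d$-regularity forces the weight $1/d$ on each $z_a$ and hence the weight $\theta_b=1-|N(b)\cap S|/d$ on $w$ in the factor indexed by $b$. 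The resulting correction $\exp(-\sum_b\theta_b\ln\theta_b)$ must be bounded from \emph{below}, but Jensen applied to the convex function $x\ln x$ bounds it from \emph{above} by its value at the average $\theta=1-p$; for sets $S$ that meet the neighborhoods $N(b)$ unevenly this lower bound can collapse entirely, and then the sum over the $\binom{n}{k}$ sets $S$ no longer produces the term $-2(1-p)\ln(1-p)$. Separately, the ``explicit combinatorial factor'' in the coefficient lower bound for a non-multilinear exponent vector (exponent $n-k$ on the degree-$n$ variable $w$, simultaneously with $k$ multilinear extractions from variables of degree up to $d$) is precisely the technical heart of Gurvits's argument --- it is where $(d-p)\ln(1-p/d)$ comes from --- and cannot be cited as a routine black box. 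As written, the plan names the right objects but defers exactly the steps where the theorem could fail.
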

It is worth introducing a notation for the function appearing in this inequality:
$$\G_d(p)=\frac{1}{2}\left(p\ln \left(\frac{d}{p}\right)+(d-p)\ln \left(1-\frac{p}{d}\right)-2(1-p)\ln (1-p)\right).$$

We note that Gurvits \cite{gur2} gave an effective form of this result, but for our purposes any $o_{v(G)}(1)$ term would suffice as we will use another form of this inequality where this term can be vanished. We also mention that in the current form of this inequality, it holds only for some special values of $p$. To achieve the aforementioned more convenient form of Gurvits's inequality, we will introduce the so-called entropy function $\lambda_G(p)$ in Section~\ref{entropy-function}.

For this function we have
$$\lambda_G(p)\approx \frac{\ln m_k(G)}{v(G)},$$
and Gurvits's theorem can be rewritten as
$$\lambda_G(p)\geq \G_d(p).$$
Moreover, we will also see that if $G$ contains a perfect matching, then
$$\lambda_G(1)=\frac{\ln \prm(G)}{v(G)}.$$
In Section~\ref{gurvits} we will prove the following extension of Gurvits's theorem for vertex-transitive bipartite graphs which also implies that the bound given in Theorem~\ref{Schrijver} can be improved for vertex-transitive bipartite graphs containing  short cycles:

\begin{Th} \label{stability} Let $G$ be a finite $d$--regular vertex-transitive bipartite graph, where $d\geq 2$. Furthermore, let the gap function $g(p)$ be defined as
$$g(p)=\lambda_G(p)-\G_d(p).$$
Then $g(p)$ is monotone increasing function with $g(0)=0$, in particular $g(p)$ is non-negative. Furthermore, if $G$ contains an $\ell$-cycle, then
$$g(p)\geq \int_0^pf(x)^{\ell}\, dx,$$
where
$$f(x)=\frac{1}{4d}\min(x,(1-x)^2).$$
\end{Th}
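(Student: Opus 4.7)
Since Section~\ref{entropy-function} is promised to set up an entropy function satisfying $\lambda_G(p)\approx \ln m_k(G)/v(G)$ (with $k=pn$) and recovering $\ln\prm(G)/v(G)$ at $p=1$, I expect it to arise as the Legendre transform of $t\mapsto \tfrac{1}{v(G)}\ln M(G,t)=\tfrac12\int\ln(1+\gamma t)\,d\rho_G(\gamma)$, where $\rho_G=\tfrac1n\sum_{k=1}^n\delta_{\gamma_k(G)}$ is the matching measure. In other words
\[
\lambda_G(p)=\tfrac12\int\ln(1+\gamma t_p)\,d\rho_G-\tfrac{p}{2}\ln t_p,
\]
with $t_p$ determined by $\int\frac{\gamma t_p}{1+\gamma t_p}\,d\rho_G=p$, and analogously $\G_d(p)$ with parameter $s_p$ and the matching measure $\rho_{T_d}$ of the infinite $d$-regular tree. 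By the envelope theorem $g'(p)=-\tfrac12\ln t_p+\tfrac12\ln s_p=\tfrac12\ln(s_p/t_p)$, and $g(0)=0$ is immediate since $\lambda_G(0)=\G_d(0)=0$.

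\textbf{Monotonicity.} To establish $g'\geq 0$ it suffices to show $s_p\geq t_p$. Because the map $t\mapsto \int\frac{\gamma t}{1+\gamma t}\,d\nu(\gamma)$ is strictly increasing in $t$ for every probability measure $\nu$ on $[0,\infty)$, this in turn reduces to the ``density form''
\[
\int\frac{\gamma t}{1+\gamma t}\,d\rho_G\;\geq\;\int\frac{\gamma t}{1+\gamma t}\,d\rho_{T_d}\qquad(t>0),
\]
which is a strengthening of Gurvits's inequality. Vertex-transitivity enters essentially here: the left-hand side equals the probability that a uniformly chosen vertex of $G$ is covered by a random matching sampled with weight $t^{|M|}$, and the minimisation of this occupation probability at the tree $T_d$ among $d$-regular bipartite locally finite graphs is derivable from Godsil's path-tree theorem together with Heilmann--Lieb real-rootedness, essentially Gurvits's capacity argument reformulated at the matching-measure level.

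\textbf{Quantitative improvement and main obstacle.} To turn this into the integral bound $g(p)\geq \int_0^p f(x)^\ell\,dx$ when $G$ contains an $\ell$-cycle, I would expand $\frac{\gamma t}{1+\gamma t}=\sum_{j\geq1}(-1)^{j-1}(\gamma t)^j$ (convergent for $t<\tfrac{1}{4(d-1)}$ by the Heilmann--Lieb support bound $\gamma\leq 4(d-1)$), identify the moment differences $\int\gamma^j\,d\rho_G-\int\gamma^j\,d\rho_{T_d}$ with local subgraph counts that vanish for $j<\ell$ but pick up a universal positive contribution from the given cycle at $j=\ell$, and convert this surplus into a lower bound on $s_p-t_p$ before integrating $g'(p)\geq \frac{s_p-t_p}{2\max(s_p,t_p)}$ from $0$ to $p$. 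I expect the $\tfrac{1}{4d}$ prefactor in $f(x)$ to trace back to Heilmann--Lieb, the $x$-factor in the small-$x$ regime to the leading term in the series, and the $(1-x)^2$-factor near $x=1$ to a matched/unmatched duality that promotes the defect $(1-p)$ to its square. The main obstacle is precisely this near-perfect-matching regime: as $p\to 1$ both $s_p$ and $t_p$ diverge while the integrand $\gamma t/(1+\gamma t)$ flattens, so extracting the $(1-x)^2$ scaling demands a delicate comparison of divergence rates, which I would handle using the fixed-point branching equation satisfied by $\rho_{T_d}$ together with the analogous one-sided equation on $G$. Closely related is the task of upgrading the first-order contribution of a single cycle into the full $\ell$-th-power factor $f(x)^\ell$ uniformly in $p$, for which I expect to iterate the moment comparison along the cycle.
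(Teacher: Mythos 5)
Your derivative computation is the right starting point: by Proposition~\ref{asymp}(b) one indeed gets $g'(p)=\tfrac12\ln\bigl(t(\mathbb{T}_d,p)/t(G,p)\bigr)$, and the whole theorem reduces to lower bounds on the surplus in the inequality $t(G,p)\le t(\mathbb{T}_d,p)$, equivalently $p(G,t)\ge p(\mathbb{T}_d,t)$. This is exactly the paper's reduction. However, there are two genuine gaps in how you propose to establish that inequality and its quantitative refinement.

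First, you attribute the density comparison $\int\frac{\gamma t}{1+\gamma t}\,d\rho_G\ge\int\frac{\gamma t}{1+\gamma t}\,d\rho_{\mathbb{T}_d}$ to ``Gurvits's capacity argument'' plus Godsil's path-tree theorem. That cannot work: Gurvits's theorem holds for \emph{all} $d$-regular bipartite graphs, whereas the density comparison (equivalently, monotonicity of $g$) is strictly stronger and \emph{fails} without vertex-transitivity --- Section~\ref{degenerate} of the paper constructs $d$-regular bipartite graphs with $g'(p_0)<0$, indeed with $t(1-p)^2$ arbitrarily large. So any correct proof must use vertex-transitivity in an essential, identifiable way, and you never say how. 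The actual mechanism is the Heilmann--Lieb path identity (Lemma~\ref{identities}(c)): for an edge $(u,v)$,
$$M(G,t)M(G-\{u,v\},t)-M(G-u,t)M(G-v,t)-tM(G-\{u,v\},t)^2=\sum_{P\in\mathcal{P}_{u,v},\,P\neq(u,v)}t^{|P|-1}M(G\setminus P,t)^2\ \ge 0,$$
combined with the identities $1-p=M(G-u,t)/M(G,t)$ (which is where $M(G-u,t)$ being independent of $u$, i.e.\ vertex-transitivity, enters via Lemma~\ref{identities}(a)) and $p/d=tM(G-\{u,v\},t)/M(G,t)$ (after averaging over edges and a Cauchy--Schwarz step when $G$ is not edge-transitive). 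This converts the nonnegative path sum exactly into $q(1-q)-t(1-dq)^2\ge0$ with $q=p/d$, which is the needed inequality.

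Second, your plan for the cycle term --- expanding $\frac{\gamma t}{1+\gamma t}$ in moments and matching moment differences to subgraph counts --- only converges for $t<1/(4(d-1))$, i.e.\ for small $p$, and you candidly flag the regime $p\to1$ as an unresolved obstacle. The paper does not need any of this: the path identity above gives an \emph{exact} formula for the surplus $r=q(1-q)-t(1-dq)^2$ as a sum over paths for every $t$, the $\ell$-cycle contributes a path $P$ of length $\ell$ with $M(G\setminus P,t)^2/M(G,t)^2\ge(1+dt)^{-2\ell}$, vertex-transitivity guarantees a $2/d$ fraction of edges lie on such cycles, and then $g'(p)\ge\tfrac{d}{2}r\ge\bigl(t/(1+dt)^2\bigr)^{\ell}$. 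The two branches of $f$ come from elementary case analysis: for $dt\le1$ one uses $p\le dt$ (Proposition~\ref{inequalities}(b)) to get $t/(1+dt)^2\ge p/(4d)$, and for $dt>1$ one uses $t\le\frac{d-1}{d^2(1-p)^2}$ to get $t/(1+dt)^2\ge(1-p)^2/(4d)$. So the $(1-x)^2$ factor is not a ``delicate comparison of divergence rates'' but a direct consequence of the upper bound on $t(G,p)$ you already have from the monotonicity step. As written, your proposal does not contain a proof of either half of the theorem.
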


\subsection{The limit of  perfect matching entropies.} In statistical physics, the dimer model is one of the most studied model. One of its main problems is the following. Let $L$ be an infinite lattice, and let $(G_i)$ be a sequence of finite graphs exhausting  $L$. The problem is to find 
$$\lim_{i\to\infty}\frac{\ln \prm(G_i)}{v(G_i)}.$$
It turns out that the actual limit heavily depends on the exhaustion (and may not exist). The best known example if $(G_i)$ are larger and larger boxes of the infinite square grid $\mathbb{Z}^2$, then the celebrated result of Kasteleyn \cite{kas} and independently Temperley and Fisher \cite{tem} asserts that the limit is $G/\pi$, where $G$ is the Catalan constant. On the other hand, it turns out that if one considers the sequence of Aztec diamonds for $(G_i)$, then the limit is $(\ln 2)/4$ (see \cite{eklp}). This reflects the fact that the boundary of a graph can affect the number of perfect matchings. On the other hand, the situation is not as bad as it seems for the first sight: in \cite{ckp} H. Cohn, R. Kenyon and J. Propp showed how one can take into account the boundaries of the graphs. Another way to overcome the difficulty of the boundary is to consider doubly periodic graphs as it was done in \cite{kos} by R. Kenyon, A. Okounkov and S. Sheffield. They considered $\mathbb{Z}^2$--periodic bipartite planar graphs $L$, and $G_i$ was  the quotient of $L$ by the action of $(i\mathbb{Z})^2$. In this setting they were able to determine the limit explicitly as a certain integral. In both papers \cite{ckp} and \cite{kos}, the techniques heavily relied on the planarity of the graph $L$. 
\medskip

In this paper we present an abstract version of these results, where we are not confined to planar graphs. Then we need to introduce a convergence concept replacing the exhaustion of $L$. This concept is the Benjamini--Schramm convergence. With some foresight we also define the limit objects of Benjamini--Schramm convergent graph sequences, the so-called \emph{random rooted graphs}.

\begin{Def} Let $L$ be a probability distribution on (infinite) rooted graphs; we will call $L$ a \emph{random rooted graph}.
For a finite rooted graph $\alpha$ and a positive integer $r$, let $\mathbb{P}(L,\alpha,r)$ be the probability that the $r$-ball
centered at a random root vertex chosen from the distribution $L$ is isomorphic to $\alpha$.

For a finite graph $G$, a finite rooted graph $\alpha$ and a positive integer
$r$, let $\mathbb{P}(G,\alpha,r)$ be the probability that the $r$-ball
centered at a uniform random vertex of $G$ is isomorphic to $\alpha$. 

We say that a sequence $(G_n)$ of bounded degree graphs is \emph{Benjamini--Schramm
convergent} if for all finite rooted graphs $\alpha$ and $r>0$, the
probabilities $\mathbb{P}(G_n,\alpha,r)$ converge. Furthermore, we say that \emph{$(G_{n})$ Benjamini--Schramm converges to $L$},
if for all positive integers $r$ and finite rooted graphs $\alpha$, $\mathbb{P}(G_{n},\alpha,r)\rightarrow \mathbb{P}(L,\alpha,r)$. 
\end{Def}

\begin{Ex} Let us consider a sequence of boxes in $\mathbb{Z}^d$ where all sides converge to infinity. This will be Benjamini--Schramm convergent graph sequence since for every fixed $r$, we will pick a vertex which at least $r$-far from the boundary with probability converging to $1$. For all these vertices we will see the same neighborhood. This also shows that we can impose arbitrary boundary condition, for instance periodic boundary condition means that we consider the sequence of toroidal boxes. We can also consider Aztec diamonds in case of $\mathbb{Z}^2$. Boxes and toroidal boxes will be Benjamini--Schramm convergent even together, and converges to a distribution which is a rooted $\mathbb{Z}^d$ with probability $1$.
\end{Ex}

\begin{Ex} Let $(G_n)$ be a sequence of $d$--regular graphs such that $g(G_n)\to \infty$, where $g(H)$ denotes the girth of a graph $H$, i. e., the length of the shortest cycle in $H$. Then $(G_n)$ Benjamini--Schramm converges to the rooted infinite $d$--regular tree $\mathbb{T}_d$.
\end{Ex}

Now we can present our result. Later we will prove a slightly stronger variant of the following theorem.

\begin{Th} \label{pm-entropy} Let $(G_i)$ be a Benjamini--Schramm convergent sequence of vertex-transitive bipartite $d$--regular graphs. Then the sequence 
$$\frac{\ln \prm(G_i)}{v(G_i)}$$
is convergent.
\end{Th}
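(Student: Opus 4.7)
The plan is to identify the perfect matching entropy with the integral of $\ln\gamma$ against the matching measure, and combine Benjamini--Schramm weak convergence of that measure with uniform integrability supplied by vertex-transitivity.

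Writing $M(G,t)=\prod_{k=1}^{n}(1+\gamma_k(G)t)$, the identity $\prm(G)=m_n(G)=\prod_{k=1}^{n}\gamma_k(G)$ gives
$$\frac{\ln \prm(G)}{v(G)}=\frac{1}{2n}\sum_{k=1}^{n}\ln\gamma_k(G)=\frac{1}{2}\int_{0}^{\infty}\ln\gamma\, d\mu_G(\gamma),$$
where $\mu_G=\frac{1}{n}\sum_{k=1}^{n}\delta_{\gamma_k(G)}$ is the matching measure, a probability measure supported in $[0,4(d-1)]$ by Heilmann--Lieb. The first step is to invoke the fact, established in \cite{ACFK} and underlying Theorem~\ref{entropy-girth}, that whenever $(G_i)$ is Benjamini--Schramm convergent with bounded degree, the matching measures $\mu_{G_i}$ converge weakly to some probability measure $\mu_\infty$. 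The reason is that the moments of $\mu_G$ are linear combinations of counts of rooted $r$-neighborhoods in $G$, i.e.\ precisely the statistics preserved by Benjamini--Schramm convergence.

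The crux is then to upgrade weak convergence to convergence of $\int\ln\gamma\, d\mu_{G_i}$. The only obstacle is the singularity of $\ln\gamma$ at $\gamma=0$: a priori a positive fraction of the atoms of $\mu_{G_i}$ could accumulate there, which is exactly what happens for $d$-regular bipartite graphs with few perfect matchings and explains why the theorem fails without the vertex-transitivity hypothesis. To rule this out I would use the bound announced in the abstract,
$$\gamma_k(G)\geq \frac{d^2}{4(d-1)}\cdot\frac{k^2}{n^2},$$
valid for vertex-transitive bipartite $d$-regular graphs on $2n$ vertices. It forces at most $\frac{2n\sqrt{(d-1)\varepsilon}}{d}$ of the $\gamma_k(G)$'s to lie in $[0,\varepsilon)$, and a direct integration then yields
$$\int_{0}^{\varepsilon}|\ln\gamma|\, d\mu_G\leq C(d)\,\sqrt{\varepsilon}\,\log(1/\varepsilon),$$
uniformly in $G$. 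Together with the uniform upper bound $\gamma_k\leq 4(d-1)$, this uniform integrability is enough to pass weak convergence through $\ln\gamma$ and conclude $\int\ln\gamma\, d\mu_{G_i}\to\int\ln\gamma\, d\mu_\infty$, proving the convergence of $\frac{\ln\prm(G_i)}{v(G_i)}$.

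The hard part is precisely this uniform integrability at $\gamma=0$; everything else reduces to the algebraic identity $\prm(G)=\prod_k\gamma_k(G)$ and to standard weak-convergence machinery. Vertex-transitivity enters only through the quantitative lower bound on the $\gamma_k$'s, which blocks the pathological accumulation of small matching-polynomial roots that would otherwise break the exchange of limits.
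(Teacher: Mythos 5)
Your proposal is correct and follows essentially the same route as the paper: the paper likewise writes $\frac{\ln \prm(G_i)}{v(G_i)}=\frac{1}{2}\int \ln(x)\, d\overline{\rho}_{G_i}(x)$ for the uniform measure on the $\gamma_k$'s, invokes weak convergence of matching measures under Benjamini--Schramm convergence, and uses the bound $\gamma_k\geq \frac{d^2}{4(d-1)}\frac{k^2}{n^2}$ to get exactly the uniform integrability estimate $O(\sqrt{\varepsilon}\ln(1/\varepsilon))$ near zero. No substantive differences.
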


Note that in this theorem vertex-transitivity plays the role of the "nice boundary condition". We also note that in case of vertex-transitive graphs, the Benjamini--Schramm convergence simply means that we know larger and larger neighbor of the root of a rooted infinite graph. 
We also would like to point out that a slightly stronger version of Theorem~\ref{entropy-girth} says that if $(G_i)$ is a sequence of bipartite graphs Benjamini--Schramm convergent to the infinite $d$--regular tree, then 
$$\lim_{i\to \infty}\frac{\ln \prm(G_i)}{v(G_i)}=\frac{1}{2}\ln \left(
\frac{(d-1)^{d-1}}{d^{d-2}}\right).$$
So in this case we do not need the vertex-transitivity of the graphs. On the other hand, in \cite{ACFK} the authors gave a sequence of $d$--regular bipartite graphs which are Benjamini--Schramm convergent, still the 
$$\lim_{i\to \infty}\frac{\ln \prm(G_i)}{v(G_i)}$$
does not exist.
\bigskip

It will turn out that the proof of Theorem~\ref{pm-entropy} heavily relies on certain estimate of the smallest zeros of the so-called matching polynomial. This result might be of independent interest of its own.

Let $G$ be a graph on $2n$ vertices, then the matching generating function of $G$ is defined as 
$$M(G,t)=\sum_{k=0}^nm_k(G)t^k=\prod_{k=1}^n (1+\gamma_k(G)t),$$
where $\gamma_1(G)\leq \gamma_2(G)\leq \dots \leq \gamma_n(G)$.
We will prove the following lower bounds for the numbers $\gamma_k(G)$.

\begin{Th} \label{zero-estimation} Let $G$ be a vertex-transitive bipartite $d$--regular graph on $2n$ vertices. Then
$$\gamma_k(G)\geq \frac{d^2}{4(d-1)}\frac{k^2}{n^2}.$$
\end{Th}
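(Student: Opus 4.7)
The plan is to deduce the result from Theorem~\ref{stability} via a comparison of the \emph{matching measure} of $G$ with the Kesten--McKay measure $\nu_d$ of the infinite $d$-regular tree. Since $G$ is $d$-regular bipartite, it admits a perfect matching, so $0$ is not a root of $\mu(G,x)$; the $2n$ roots come in antipodal pairs $\pm\theta_1,\ldots,\pm\theta_n$ with $0<\theta_1\leq\cdots\leq\theta_n$ and $\gamma_k=\theta_k^2$. Put $\rho_G=\frac{1}{2n}\sum_r\delta_r$, so that
\[
F_G(t):=\frac{\ln M(G,t)}{2n}=\tfrac12\int\ln(1+tx^2)\,d\rho_G(x),\qquad p_G(t):=\frac{tM'(G,t)}{nM(G,t)}=1-\int\frac{d\rho_G(x)}{1+tx^2},
\]
and define $F_\mathbb{T}$, $p_\mathbb{T}$ in the same way using $\nu_d$. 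Legendre duality turns Gurvits's inequality $\lambda_G\geq\G_d$ into $F_G\geq F_\mathbb{T}$.

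The first substantive step is to promote $F_G\geq F_\mathbb{T}$ to the pointwise resolvent bound $p_G(t)\geq p_\mathbb{T}(t)$ for every $t>0$, equivalently
\[
\int\frac{d\rho_G(x)}{1+tx^2}\;\leq\;\int\frac{d\nu_d(x)}{1+tx^2}\;=:\;I_d(t).
\]
This uses Theorem~\ref{stability}: by the envelope theorem $\lambda_G'(p)=-\tfrac12\ln t_G^*(p)$, where $t_G^*(p)$ is the unique $t$ satisfying $p_G(t)=p$ (and similarly for $\G_d$), so the monotonicity of the gap $g(p)=\lambda_G(p)-\G_d(p)$ forces $\lambda_G'\geq\G_d'$, hence $t_G^*(p)\leq t_\mathbb{T}^*(p)$, which is exactly $p_G(t)\geq p_\mathbb{T}(t)$. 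A direct computation with the Stieltjes transform of $\nu_d$ (i.e.\ the Green's function of $\mathbb{T}_d$) yields $I_d(t)=\frac{2(d-1)}{(d-2)+d\sqrt{1+4(d-1)t}}$.

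The rest is a Chebyshev-type estimate. From $\mathbf{1}_{[-y,y]}(x)\leq\frac{1+ty^2}{1+tx^2}$ I obtain $\rho_G([-y,y])\leq(1+ty^2)I_d(t)$; specialising to $t=1/y^2$ gives
\[
\rho_G([-y,y])\;\leq\;\frac{4(d-1)}{(d-2)+d\sqrt{1+4(d-1)/y^2}}.
\]
Because the roots are antipodally paired and avoid $0$, $n\rho_G([-y,y])$ counts the positive roots in $(0,y]$; in particular, if $\theta_k\leq y$ then $k\leq n\rho_G([-y,y])$. Rearranging via the identity $\bigl(4n(d-1)-k(d-2)\bigr)^2-d^2k^2=4(d-1)(2n-k)\bigl(2n(d-1)+k\bigr)$ produces
\[
y^2\;\geq\;\frac{d^2k^2}{(2n-k)\bigl(2n(d-1)+k\bigr)}\;\geq\;\frac{d^2k^2}{4(d-1)n^2},
\]
the last inequality being $4(d-1)n^2-(2n-k)(2n(d-1)+k)=2nk(d-2)+k^2\geq 0$. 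Letting $y\downarrow\theta_k$ yields $\gamma_k=\theta_k^2\geq\frac{d^2k^2}{4(d-1)n^2}$. The main difficulty is the upgrade from Theorem~\ref{stability} to the pointwise resolvent inequality above; once that is in hand, everything reduces to the Chebyshev step plus elementary algebra.
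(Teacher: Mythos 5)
Your proof is correct, and at its core it runs on the same engine as the paper's: the resolvent inequality $p(G,t)\geq p(\mathbb{T}_d,t)$ (equivalently $t(G,p)\leq t(\mathbb{T}_d,p)$), i.e.\ part (c) of Proposition~\ref{inequalities} as repackaged in Proposition~\ref{tree}, followed by a Markov-type count of the small $\gamma_i$ at the activity $t=1/\gamma_k$. Indeed, your Chebyshev step $\rho_G([-y,y])\leq(1+ty^2)\int(1+tx^2)^{-1}\,d\rho_G$ with $t=1/y^2$ is exactly the paper's estimate $\frac1n\sum_{i\leq k}(1+\gamma_i/\gamma_k)^{-1}\geq k/(2n)$ in measure-theoretic clothing. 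There are two genuine differences. First, you reach the key inequality by a detour: you quote Theorem~\ref{stability} and differentiate the gap function, whereas the paper cites Proposition~\ref{inequalities}(c) directly. This is not circular (the proof of Theorem~\ref{stability} nowhere uses Theorem~\ref{zero-estimation}), but note that the monotonicity of $g$ in Theorem~\ref{stability} is itself \emph{proved} from the very inequality $t(G,p)\leq t(\mathbb{T}_d,p)$ that you then extract from it, so the direct citation is both shorter and respects the paper's order of exposition. Second, you keep the exact value $1-p(\mathbb{T}_d,t)=\frac{2(d-1)}{(d-2)+d\sqrt{1+4(d-1)t}}$ rather than the paper's weakened form $t(1-p)^2\leq\frac{d-1}{d^2}$; this costs you the factorization $\bigl(4n(d-1)-k(d-2)\bigr)^2-d^2k^2=4(d-1)(2n-k)\bigl(2n(d-1)+k\bigr)$ (which I verified) but buys the slightly sharper intermediate bound $\gamma_k\geq\frac{d^2k^2}{(2n-k)(2n(d-1)+k)}$ before you relax it to the stated constant. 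All the remaining computations (the closed form of $I_d(t)$, the legitimacy of the squaring step since $4n(d-1)>k(d-2)$, and the final comparison $(2n-k)\bigl(2n(d-1)+k\bigr)\leq 4(d-1)n^2$) check out.
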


This result implies that for a $d$--regular vertex-transitive bipartite graph on $2n$ vertices we have
$$\frac{m_{n-1}(G)}{m_n(G)}=\sum_{k=1}^n \frac{1}{\gamma_k(G)}\leq \sum_{k=1}^n\frac{4(d-1)}{d^2}\frac{n^2}{k^2}\leq \frac{2\pi^2}{3}\frac{(d-1)}{d^2}n^2.$$
On the other hand, one can prove a bit better result:

\begin{Th} \label{ratio} Let $G$ be a $d$--regular vertex-transitive bipartite graph on $2n$ vertices. Then
$$\frac{m_{n-1}(G)}{m_n(G)}\leq \frac{2}{d}n^2.$$
\end{Th}

We mention that the best previous result is due to C. Kenyon, D. Randall, A. Sinclair\footnote{Actually, in the acknowledgment of their paper the authors reveal that this result is due to M. Jerrum.}.

\begin{Th}[C. Kenyon, D. Randall, A. Sinclair, (M. Jerrum)]
Let $G$ be a $d$--regular vertex-transitive graph on $2n$ vertices. If $G$ is bipartite, then
$$\frac{m_{n-1}(G)}{m_n(G)}\leq n^2.$$
If $G$ is not bipartite, then we still have
$$\frac{m_{n-1}(G)}{m_n(G)}\leq 4n^3.$$
\end{Th}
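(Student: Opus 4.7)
The plan is to exploit the factorization $M(G,t) = \prod_{k=1}^n(1+\gamma_k(G)t)$. By Vieta's formulas,
$$\frac{m_{n-1}(G)}{m_n(G)} = \sum_{k=1}^n \frac{1}{\gamma_k(G)},$$
since $m_n(G) = \prod_k \gamma_k(G)$ and $m_{n-1}(G) = \sum_j \prod_{i\neq j}\gamma_i(G)$. A first attempt combines this identity with Theorem~\ref{zero-estimation}: the bound $\gamma_k(G) \geq \frac{d^2}{4(d-1)}\frac{k^2}{n^2}$ yields
$$\frac{m_{n-1}(G)}{m_n(G)} \;\leq\; \frac{4(d-1)n^2}{d^2}\sum_{k=1}^n \frac{1}{k^2} \;\leq\; \frac{2\pi^2(d-1)}{3d^2}n^2,$$
which falls short of the target $\frac{2n^2}{d}$ by a factor of roughly $\pi^2/3 \approx 3.29$. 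So, as the text remarks, a sharper argument specific to Theorem~\ref{ratio} is needed.

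A plausible direct approach proceeds via vertex-transitivity. Each $(n-1)$-matching of $G$ has a unique unmatched vertex on each side of the bipartition, and by symmetry each vertex plays that role the same number of times. Hence $m_{n-1}(G) = n \cdot m_{n-1}(G-v)$ for any vertex $v$, and it suffices to show $m_{n-1}(G-v) \leq \frac{2n}{d}m_n(G)$. Writing $m_{n-1}(G-v) = \sum_{b}\prm(G-v-b)$, the standard matching recurrence $\prm(G) = \sum_{b \sim v}\prm(G-v-b)$ handles the contribution from neighbors of $v$, which is exactly $m_n(G)$; this accounts for a contribution of $n$ to the ratio. It remains to bound the non-adjacent contribution $\sum_{b \not\sim v}\prm(G-v-b) \leq \frac{2n-d}{d}m_n(G)$.

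The hard part is controlling this non-adjacent sum. The natural tool is an augmenting-path argument: each $(n-1)$-matching with non-adjacent unmatched pair $v,b$ can be augmented along some alternating path (of length $\geq 3$) to a perfect matching $M$, giving a map from non-adjacent $(n-1)$-matchings to pairs (PM, alternating path). The factor $1/d$ in the bound should come from the $d$-regular branching at each interior step of the path, and the factor $2$ from symmetrizing over the two orientations of the path; vertex-transitivity is what allows one to average cleanly over starting vertices, and without it the bound can fail as the paper notes. The main technical obstacle is to make this counting tight: one must choose the canonical augmentation so that each perfect matching has an appropriately bounded number of preimages across alternating paths of arbitrary length, avoiding the exponential overcounting that would otherwise arise.
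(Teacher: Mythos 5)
Your write-up is a plan rather than a proof, and the plan has two genuine holes. First, the theorem has a non-bipartite part (the bound $4n^3$) that your argument never touches: the decomposition into one hole on each side of the bipartition and the identity $m_{n-1}(G)=n\cdot m_{n-1}(G-v)$ both use bipartiteness, so a separate argument is needed there. Second, and more seriously, even in the bipartite case the step that carries all the content --- bounding $\sum_{b\not\sim v}\prm(G-v-b)$ --- is exactly the step you leave open. The alternating-path scheme as described does not work as stated: a single perfect matching can arise as $M'\triangle P$ from exponentially many pairs $(M',P)$, and choosing a canonical augmentation so that each perfect matching has a bounded number of preimages is the entire difficulty, not a technicality; you correctly identify the obstacle but do not overcome it. (Your first attempt, via Theorem~\ref{zero-estimation} and $\sum_k 1/\gamma_k$, is sound and yields $\frac{2\pi^2(d-1)}{3d^2}n^2$, which is below $n^2$ only for $d\geq 6$, so it too falls short of the stated theorem for small $d$.)

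For comparison: the paper does not reprove the Kenyon--Randall--Sinclair bound; it proves the stronger bipartite bound $\frac{m_{n-1}(G)}{m_n(G)}\leq \frac{2}{d}n^2$ (Theorem~\ref{ratio}) by a route that sidesteps the path counting entirely. The key is part (c) of Lemma~\ref{identities} (Heilmann--Lieb): for an edge $(u,v)$ of a bipartite graph every $u$--$v$ path has an even number of vertices, so
$$M(G,t)M(G-\{u,v\},t)-M(G-u,t)M(G-v,t)=\sum_{P\in \mathcal{P}_{u,v}}t^{|P|-1}M(G\setminus P,t)^2$$
has nonnegative coefficients. Extracting the coefficient of $t^{2n-2}$, inserting $m_{n-1}(G-u)=m_{n-1}(G-v)=\frac{1}{n}m_{n-1}(G)$ (the same vertex-transitivity identity you use), and summing over all $nd$ edges with the double-counting identities $\sum_{(u,v)\in E(G)}m_{n-2}(G-\{u,v\})=(n-1)m_{n-1}(G)$ and $\sum_{(u,v)\in E(G)}m_{n-1}(G-\{u,v\})=n\,m_n(G)$ gives $\frac{m_{n-1}(G)}{m_n(G)}\leq \frac{n(2n-1)}{d}$. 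In effect, the sign structure of the Heilmann--Lieb path identity packages all the long-alternating-path contributions you were trying to count into a single inequality; if you want to complete your approach, this identity is the tool that replaces your missing counting lemma, and the non-bipartite case still requires a separate argument.
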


Surprisingly, Theorem~\ref{zero-estimation} and \ref{ratio} fail spectacularly without the vertex-transitivity condition. In Section~\ref{degenerate} we will show that there exist constants $c_d<1$  and $C_d>1$ for which one can construct a graph $G$ with $v(G)=2n$ vertices for arbitrarily large $n$ such that
$$\gamma_1(G)<c_d^{n},$$ 
and  
$$\frac{m_{n-1}(G)}{m_n(G)}>C_d^{n}.$$ 
The construction relies on the one given  in \cite{ACFK}, which used to show that 
$$\lim_{i\to \infty}\frac{\ln \prm(G_i)}{v(G_i)}$$
may not exist for Benjamini--Schramm convergent $d$--regular bipartite graphs.
\bigskip

The rest of the paper is organized as follows. In Section~\ref{entropy-function} we will introduce many important concepts, most notably the entropy function $\lambda_G(p)$, and we establish a few fundamental properties of them. In Section~\ref{main} we will prove Theorem~\ref{pm-entropy}, \ref{zero-estimation} and \ref{ratio}. In Section~\ref{gurvits} we prove Theorem~\ref{stability}.
In Section~\ref{degenerate} we show that vertex-transitivity was indeed crucial in all previous theorems by constructing $d$--regular graphs violating the claims of these theorems.

\section{Preliminaries and basic notions} \label{entropy-function}

Throughout the paper, $G$ denotes a finite graph with vertex set $V(G)$ and
edge set $E(G)$. The number of vertices is denoted by $v(G)$. The \emph{degree} of a vertex
is the number of its neighbors. A graph is called $d$\emph{--regular} if every
vertex has degree exactly $d$. The graph $G-S$ denotes the graph obtained from
$G$ by erasing the vertex set $S$ together with all edges incident to $S$. If $S=\{v\}$ then we simply write $G-v$ instead of $G-\{v\}$.
If $e$ is an edge then $G-e$ denotes the graph with vertex set $V(G)$ and edge set $E(G)\setminus \{e\}$. A \emph{path} $P$ is a sequence of vertices $v_1,v_2,\dots ,v_k$ such that $v_i\neq v_j$ if $i\neq j$ and $(v_i,v_{i+1})\in E(G)$ for $i=1,\dots ,k-1$. A \emph{cycle} $C$ is a 
sequence of vertices $v_1,v_2,\dots ,v_k$ such that $v_i\neq v_j$ if $i\neq j$ and $(v_i,v_{i+1})\in E(G)$ for $i=1,\dots ,k$, where $v_{k+1}=v_1$. The length of the cycle is $k$ in this case. A \emph{$k$--matching} is a set of edges $\{e_1,\dots ,e_k\}$ such that for any $i$ and $j$, the vertex set of $e_i$ and $e_j$ are disjoint, in other words, $e_1,\dots ,e_k$ cover $2k$ vertices together. A \emph{perfect matching} is a matching which covers every vertices.
\bigskip

Let $G=(V,E)$ be a finite graph on $v(G)=2n$ vertices. Let $m_k(G)$ be the number of $k$--matchings ($m_0(G)=1$). Let $t$ be an arbitrary non-negative real number, and
$$M(G,t)=\sum_{k=0}^{n}m_k(G)t^k,$$
and 
$$\mu(G,x)=\sum_{k=0}^{n}(-1)^km_k(G)x^{v(G)-2k}.$$
We call $M(G,t)$ the matching generating function, $\mu(G,x)$ the matching polynomial. Clearly, they encode the same information.
If 
$$M(G,t)=\sum_{k=0}^{n}m_k(G)t^k=\prod_{i=1}^{n}(1+\gamma_i(G) t),$$
then $(\pm \sqrt{\gamma_i(G)})_{i=1}^n$ are the zeros of $\mu(G,x)$. The following fundamental theorem of Heilmann and Lieb \cite{hei} is crucial in all our proofs.

\begin{Th}[Heilmann and Lieb \cite{hei}] \label{Hei} The zeros of the matching polynomial
$\mu(G,x)$ are real, and if the largest degree $D$ is greater than $1$,
then  all zeros lie in the interval $[-2\sqrt{D-1},2\sqrt{D-1}]$. 
\end{Th}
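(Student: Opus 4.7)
The plan is to split the theorem into the reality claim and the interval bound $|x| \leq 2\sqrt{D-1}$, handled via two classical techniques. For reality I would argue by induction using the vertex-deletion recursion together with a standard interlacing argument; for the spectral bound I would pass through Godsil's path-tree construction to reduce everything to the adjacency spectrum of a tree.

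For reality, I would prove by induction on $v(G)$ the stronger statement that for every $v \in V(G)$ the zeros of $\mu(G-v,x)$ interlace those of $\mu(G,x)$. The key identity is
$$\mu(G,x) = x\mu(G-v,x) - \sum_{u \sim v}\mu(G-u-v,x),$$
obtained by conditioning on whether a matching covers $v$. By the inductive hypothesis, each $\mu(G-u-v,x)$ interlaces $\mu(G-v,x)$; since a non-negative linear combination of polynomials that interlace a common real-rooted polynomial is itself an interlacer (a standard fact proved by a sign-change argument), the sum on the right-hand side alternates sign across consecutive zeros of $\mu(G-v,x)$. Evaluating the recursion at those zeros forces $\mu(G,x)$ to change sign between them, and together with the leading-coefficient behavior at $\pm \infty$ this yields $v(G)$ real zeros interlacing those of $\mu(G-v,x)$.

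For the norm bound, I would invoke Godsil's path-tree construction: fix a vertex $v$ and let $T(G,v)$ be the tree whose nodes are simple paths in $G$ starting at $v$, with two paths joined by an edge when one extends the other by a single step. Godsil's identity
$$\frac{\mu(G-v,x)}{\mu(G,x)} = \frac{\mu(T(G,v)-v,x)}{\mu(T(G,v),x)}$$
implies that every zero of $\mu(G,x)$ is also a zero of $\mu(T(G,v),x)$. On a tree $T$ the matching polynomial coincides with the characteristic polynomial of the adjacency matrix, since only fixed-point-free involutions survive in the expansion of $\det(xI-A(T))$. The path tree has maximum degree at most $D$, so the problem reduces to bounding the adjacency eigenvalues of finite trees of maximum degree $\leq D$ by $2\sqrt{D-1}$; this follows by viewing such a tree as a finite subgraph of the infinite $D$-regular tree $\mathbb{T}_D$ and applying the min-max principle together with the explicit $\ell^2$-spectral radius $2\sqrt{D-1}$ of $\mathbb{T}_D$, computed via an explicit radial eigenfunction. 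The main obstacle will be Godsil's path-tree identity itself, which requires a careful matching-level correspondence between $G$ and $T(G,v)$; once that is in hand, the reality/interlacing statement and the tree spectral bound are routine.
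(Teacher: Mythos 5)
This theorem is not proved in the paper at all: it is imported as a black box from Heilmann and Lieb \cite{hei}, so there is no internal proof to compare against. Your sketch is a correct outline of the standard modern proof, essentially the one in Godsil's book \cite{god3}: reality via the deletion recursion $\mu(G,x)=x\mu(G-v,x)-\sum_{u\sim v}\mu(G-\{u,v\},x)$ and an interlacing induction, and the bound $|x|\leq 2\sqrt{D-1}$ via the path-tree $T(G,v)$, the divisibility of $\mu(T(G,v),x)$ by $\mu(G,x)$, the identification of the matching polynomial of a tree with its characteristic polynomial, and the spectral radius of $\mathbb{T}_D$. This is a genuinely different route from Heilmann and Lieb's original argument, which works directly with the recursion and an inductive bound on the ratios $\mu(G,x)/\mu(G-v,x)$ for $|x|>2\sqrt{D-1}$ (no path trees, no reduction to adjacency spectra); their route is shorter but less structural, while yours yields the stronger interlacing statement as a byproduct. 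Two points in your sketch deserve care when written out. First, in the interlacing step, the fact that a non-negative combination of the $\mu(G-\{u,v\},x)$ again interlaces $\mu(G-v,x)$ uses that all of them interlace it ``from the same side'' (same degree, positive leading coefficients), and repeated roots of $\mu(G-v,x)$ must be handled with the usual multiplicity convention. Second, a radial eigenfunction of $\mathbb{T}_D$ at $2\sqrt{D-1}$ is not in $\ell^2$, so it does not by itself compute the operator norm; for the upper bound you need a Schur test with the weight $(D-1)^{-d(v,o)/2}$ (or a closed-walk count), after which the min-max comparison with the finite subtree goes through. With those two details supplied, the argument is complete and correct.
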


In other words, $\gamma_i$ are real and satisfy the inequality $0\leq \gamma_i(G)\leq 4(D-1)$.
\bigskip

\noindent Let us define
$$p(G,t)=\frac{t\cdot M'(G,t)}{n\cdot  M(G,t)},$$
and 
$$F(G,t)=\frac{\ln M(G,t)}{v(G)}-\frac{1}{2}p(G,t) \ln(t).$$
We will call $p(G,t)$ the density function. Note that there is a natural interpretation of $p(G,t)$. Assume that we choose a random matching $M$ with probability proportional to $t^{|M|}$. Then the expected number of vertices covered by a random matching is $p(G,t)\cdot v(G)$.

\noindent Let 
$$p^*(G)=\frac{2\nu(G)}{v(G)},$$
where $\nu(G)$ denotes the number of edges in the largest matching. If $G$ contains a perfect matching, then clearly $p^*=1$. The function $p=p(G,t)$ is a strictly monotone increasing function which maps $[0,\infty)$ to $[0,p^*)$, where $p^*=p^*(G)$. Hence we can consider its inverse function $t(p)=t(G,p)$ on the interval $[0,p^*)$. Finally, let
$$\lambda_G(p)=F(G,t(p))$$
if $p<p^*$, and $\lambda_G(p)=0$ if $p>p^*$.
Note that we have not defined $\lambda_G(p^*)$ yet. We simply define it as a limit:
$$\lambda_G(p^*)=\lim_{p\nearrow p^*}\lambda_G(p).$$
This limit exists, see part (c) of Proposition~\ref{asymp}.
Later we will extend the definition of $p(G,t), F(G,t)$ and $\lambda_G(p)$ to random rooted graphs $L$.

The intuitive meaning of $\lambda_G(p)$ is the following. Assume that we want to count the number of matchings covering $p$ fraction of the vertices. 
Let us assume that it makes sense: $p=\frac{2k}{v(G)}=\frac{k}{n}$, and so we wish to count $m_k(G)$. Then
$$\lambda_G(p)\approx \frac{\ln m_k(G)}{v(G)}.$$
The more precise formulation of this statement will be given in Proposition~\ref{asymp}. The proof of this proposition is given in the paper \cite{ACH}.
\bigskip

\begin{Prop} \label{asymp} Let $G$ be a finite graph. \\ 
(a) Let $rG$ be $r$ disjoint copies of $G$. Then
$$\lambda_G(p)=\lambda_{rG}(p).$$
(b) If $p<p^*$, then 
$$\frac{d}{dp}\lambda_G(p)=-\frac{1}{2}\ln t(p).$$
(c) The limit 
$$\lim_{p\nearrow p^*}\lambda_G(p)$$
exists. \\
(d) Let $k\leq \nu(G)$ and  $p=\frac{2k}{v(G)}=\frac{k}{n}$. Then
$$\left|\lambda_G(p)-\frac{\ln m_k(G)}{v(G)}\right|\leq \frac{\ln v(G)}{v(G)}.$$
(e) Let us define 
$$\lambda_G(p^*)=\lim_{p\nearrow p^*}\lambda_G(p).$$
Let $k=\nu(G)$, then for $p^*=\frac{2k}{v(G)}$ we have
$$\lambda_G(p^*)=\frac{\ln m_k(G)}{v(G)}.$$
In particular, if $G$ contains a perfect matching, then
$$\lambda_G(1)=\frac{\ln \prm(G)}{v(G)}.$$
(f) If for some function $f(p)$ we have
$$\lambda_G(p)\geq f(p)+o_{v(G)}(1)$$
then
$$\lambda_G(p)\geq f(p).$$
\end{Prop}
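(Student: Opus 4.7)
The plan is to dispatch the six statements in the order (a), (b), (f), (c), (d), (e), since later parts rely on identities established earlier.

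Parts (a) and (b) are direct computations. For (a), the factorization $M(rG,t)=M(G,t)^r$ combined with $v(rG)=r\,v(G)$ yields $p(rG,t)=p(G,t)$ and $F(rG,t)=F(G,t)$, so $t_{rG}(p)=t_{G}(p)$ and hence $\lambda_{rG}=\lambda_{G}$. For (b), I would differentiate $F$ in $t$: the identity $M'(G,t)/M(G,t)=np(G,t)/t$ baked into the definition of $p$ makes the $M'/M$ contribution exactly cancel the $-p/(2t)$ term coming from $\partial_t(-\tfrac12 p\ln t)$, leaving $\partial_{t}F=-\tfrac12\, p'(G,t)\ln t$. The chain rule together with $dt/dp=1/p'(G,t)$ then gives $d\lambda_G/dp=-\tfrac12\ln t(p)$. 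Part (f) is a neat application of (a): if the hypothesis $\lambda_H(p)\geq f(p)+o_{v(H)}(1)$ holds for every graph $H$, applying it to $H=rG$ yields $\lambda_G(p)=\lambda_{rG}(p)\geq f(p)+o_{rv(G)}(1)$, and letting $r\to\infty$ kills the error term.

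For (c), I would use (b) to write $\lambda_G(p)=\lambda_G(0)-\tfrac12\int_0^p\ln t(s)\,ds$ and argue that the integral converges absolutely at $p^*$. Setting $k^*=\nu(G)$ and expanding
\[
M(G,t)=m_{k^*}t^{k^*}\Bigl(1+\frac{m_{k^*-1}}{m_{k^*}}\cdot\frac{1}{t}+O(t^{-2})\Bigr)\quad(t\to\infty),
\]
a short calculation gives $n(p^*-p(G,t))=(m_{k^*-1}/m_{k^*})/t+O(t^{-2})$. Inverting yields $t(p)\sim C(p^*-p)^{-1}$, so $\ln t(p)=O\bigl(-\ln(p^*-p)\bigr)$ is integrable on $[0,p^*]$ and $\lambda_G(p^*)$ exists.

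The heart of the proposition is (d), and it rests on the Heilmann--Lieb theorem: since the matching polynomial has only real zeros, the sequence $(m_j(G))$ is log-concave, and hence so is $(a_j):=(m_jt^j)$. When $t=t(p)$ with $p=k/n$, the defining identity $tM'(G,t)=kM(G,t)$ says precisely that the mean of the weights $(a_j)$ equals $k$. Writing $v(G)\lambda_G(p)=\ln M(G,t)-k\ln t$, the trivial bound $M(G,t)\geq m_kt^k$ instantly gives $\lambda_G(p)\geq \ln m_k(G)/v(G)$. The matching upper bound reduces to showing $M(G,t)\leq v(G)\cdot m_kt^k$, i.e., $\sum_j a_j\leq v(G)\cdot a_k$. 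This follows from log-concavity of $(a_j)$ together with the mean-$k$ condition and the bounded support $\{0,\dots,n\}$, but extracting the clean error $\ln v(G)/v(G)$ rather than a constant multiple is the main obstacle: it requires careful bookkeeping of the log-concavity ratios $a_{j+1}/a_j$ near $j=k$. Finally, part (e) follows by letting $p\nearrow p^*$ in $v(G)\lambda_G(p)=\ln M(G,t)-np\ln t$ and invoking the asymptotics from (c): $\ln M(G,t)-k^*\ln t\to\ln m_{k^*}(G)$ and $(k^*-np)\ln t\to 0$ because $k^*-np=O(1/t)$; the perfect-matching special case is $k^*=n$.
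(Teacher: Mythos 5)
Parts (a), (b) and (f) are correct and complete: multiplicativity of $M(G,t)$ over disjoint unions, the exact cancellation of the $M'/M$ term against $\partial_t(-\tfrac12 p\ln t)$, and the tensor-power trick for (f) are all the standard arguments. Your treatment of (c) and (e) via the expansion of $M(G,t)$ at $t=\infty$, which gives $n(p^*-p)=\frac{m_{k^*-1}}{m_{k^*}}t^{-1}+O(t^{-2})$, hence integrability of $\ln t(p)$ at $p^*$ and the vanishing of $(k^*-np)\ln t$, is also sound. (For context: the paper itself does not prove this proposition but defers to \cite{ACH}, so there is no in-paper argument to compare against.)

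The one genuine gap is the upper bound in (d), which you yourself flag. You reduce it to $\sum_j a_j\le v(G)\,a_k$ with $a_j=m_j(G)t^j$ and $t=t(p)$, and assert that this ``follows from log-concavity of $(a_j)$ together with the mean-$k$ condition and the bounded support.'' The natural reading of that sentence --- that the mean-$k$ condition forces $a_k=\max_j a_j$, whence $\sum_j a_j\le (n+1)a_k$ --- is false: for discrete log-concave sequences the mean and the mode need not coincide. For instance $(a_0,a_1,a_2,a_3)=(10,9,8,1)$ is log-concave with mean exactly $1$, yet its mode is $0$. The inequality you want is nevertheless true, and the clean way to get it is variational rather than by bookkeeping ratios at the actual $t(p)$: since $\frac{d}{ds}\ln\bigl(M(G,s)/s^k\bigr)=\frac{1}{s}\bigl(np(G,s)-k\bigr)$, the point $s=t(p)$ \emph{minimizes} $M(G,s)/s^k$, so $v(G)\lambda_G(p)=\min_{s>0}\ln\sum_j m_j(G)s^{j-k}$. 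Now choose any $s_0\in\bigl[\tfrac{m_{k-1}}{m_k},\tfrac{m_k}{m_{k+1}}\bigr]$; this interval is nonempty by the log-concavity of $(m_j)$ which you correctly derived from Heilmann--Lieb via Newton's inequalities, and the chord-slope form of log-concavity gives $m_j s_0^{\,j-k}\le m_k$ for every $j$. Hence $v(G)\lambda_G(p)\le \ln\bigl((n+1)m_k(G)\bigr)\le \ln m_k(G)+\ln v(G)$, which is exactly the stated error term; the lower bound $M(G,t)\ge m_k(G)t^k$ you already have. With this substitution for the last step of (d), the whole proof goes through.
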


\subsection{Benjamini--Schramm convergence and  matching measure} \label{measure}

In this section we review a few things from the paper \cite{ACH}.

\begin{Def} The matching measure of a finite graph is defined as
$$\rho_G=\frac{1}{v(G)}\sum_{z_i:\ \mu(G,z_i)=0}\delta(z_i),$$
where $\delta(s)$ is the Dirac-delta measure on $s$, and we take every $z_i$ into account with its multiplicity.
\end{Def}

In other words, the matching measure  is the probability measure of uniform
distribution on the zeros of $\mu(G,x)$.

\begin{Th}[\cite{ACFK,ACH}] \label{wc} Let $(G_i)$ be  a Benjamini--Schramm convergent bounded degree graph sequence. Let $\rho_{G_i}$ be the matching measure of the graph $G_i$. Then the sequence $(\rho_{G_i})$ is
weakly convergent, i. e., there exists some measure $\rho_{L}$ such that for every bounded continuous function $f$, we have
$$\lim_{i\to \infty} \int f(z)\, d\rho_{G_i}(z)=\int f(z)\, d\rho_{L}(z).$$
\end{Th}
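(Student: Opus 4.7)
The plan is to prove weak convergence by reducing it to convergence of all polynomial moments. By the Heilmann--Lieb theorem (Theorem~\ref{Hei}), every zero of $\mu(G_i,x)$ lies in the fixed compact interval $[-2\sqrt{D-1},2\sqrt{D-1}]$, where $D$ is the common bound on the maximum degree in the sequence $(G_i)$. Consequently the probability measures $\rho_{G_i}$ are supported in a common compact set $K$, and on such a set weak convergence is equivalent to convergence of the moments $\int z^k\,d\rho_{G_i}(z)$ for every non-negative integer $k$, by the Stone--Weierstrass theorem.

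The central step is to rewrite the $k$-th moment as an average of a local statistic. Here I would use Godsil's path-tree construction: for every vertex $v$ of $G$, the rational function $\mu(G-v,x)/\mu(G,x)$ equals $\mu(T(G,v)-v,x)/\mu(T(G,v),x)$, where $T(G,v)$ is the path tree of $G$ rooted at $v$. Combining this identity with $\mu'(G,x)/\mu(G,x)=\sum_v \mu(G-v,x)/\mu(G,x)$ and expanding the Stieltjes transform of $\rho_G$ around infinity yields the combinatorial identity
$$\int z^k\,d\rho_G(z)\;=\;\frac{1}{v(G)}\sum_{v\in V(G)}\mathrm{tw}_k(G,v),$$
where $\mathrm{tw}_k(G,v)$ counts closed \emph{tree-like} walks of length $k$ starting at $v$, that is, walks in $G$ which, when lifted to $T(G,v)$, become walks in that tree. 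Since a walk of length $k$ cannot leave the ball of radius $k$ around its starting point, the quantity $\mathrm{tw}_k(G,v)$ depends only on the isomorphism type of the rooted ball $B_k(G,v)$, and it is bounded uniformly by $D^k$.

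Once this local formula is in place, the conclusion is immediate from the definition of Benjamini--Schramm convergence. For each fixed $k$, the map $(G,v)\mapsto \mathrm{tw}_k(G,v)$ is a bounded function of the rooted $k$-ball, hence a finite linear combination of the indicators whose averages over $V(G_i)$ are the probabilities $\mathbb{P}(G_i,\alpha,k)$. By hypothesis each such probability converges, so the $k$-th moment of $\rho_{G_i}$ converges; since this holds for all $k$ and the supports are uniformly compact, $\rho_{G_i}$ converges weakly to some measure $\rho_L$, whose moments are the limiting values. The main obstacle is Step~2: the local moment formula relies on Godsil's path-tree identity (the key tool behind the Heilmann--Lieb theorem) together with the observation that the path tree of $G$ rooted at $v$, truncated at depth $k$, is determined solely by $B_k(G,v)$. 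Both ingredients are classical, but they carry the real content; once established, the remainder is a standard moment-method argument.
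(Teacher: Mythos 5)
Your argument is correct and is essentially the proof given in the cited sources \cite{ACFK,ACH} (the present paper only quotes the theorem): uniform compact support from Heilmann--Lieb, the identification of $\int z^k\,d\rho_G$ with the average number of closed tree-like walks of length $k$ via Godsil's path-tree identity, and the observation that this is a bounded local statistic whose average converges under Benjamini--Schramm convergence. No gaps; this is the standard route.
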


Based on Theorem~\ref{wc}, one can prove the following theorem also proved in \cite{ACH} on limits of $p(G_i,t),t(G_i,p)$ and $\lambda_{G_i}(p)$.

\begin{Th}[\cite{ACH}] \label{entropy} Let $(G_i)$ be a Benjamini--Schramm convergent graph sequence of bounded degree graphs. Then the sequences of functions\\
(a) $$p(G_i,t),$$
(b) $$\frac{\ln M(G_i,t)}{v(G_i)}$$
converge to strictly monotone increasing continuous functions on the interval $[0,\infty)$. \\
Let $p_0$ be a real number between $0$ and $1$ such that $p^*(G_i)\geq p_0$ for all $i$. Then\\
(c) $$t(G_i,p),$$
(d) $$\lambda_{G_i}(p)$$
are convergent for all $0\leq p<p_0$.
\end{Th}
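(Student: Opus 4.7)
The plan is to reduce all four claims to the weak convergence of matching measures (Theorem~\ref{wc}) through explicit integral representations. Using the factorization $M(G,t)=\prod_{i=1}^n(1+\gamma_i(G)t)$ together with the fact that the $2n$ zeros of $\mu(G,x)$ are $\pm\sqrt{\gamma_i(G)}$ counted with multiplicity, I would first verify the identities
$$\frac{\ln M(G,t)}{v(G)} = \frac{1}{2}\int \ln(1+z^2 t)\, d\rho_G(z), \qquad p(G,t) = \int \frac{z^2 t}{1+z^2 t}\, d\rho_G(z).$$
By Theorem~\ref{Hei}, every matching measure $\rho_{G_i}$ is supported in the compact interval $[-2\sqrt{D-1},2\sqrt{D-1}]$, where $D$ is the uniform degree bound, so for each fixed $t\geq 0$ both integrands above are bounded continuous functions of $z$.

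For parts (a) and (b), Theorem~\ref{wc} then yields pointwise convergence on $[0,\infty)$:
$$p(G_i,t) \to p_L(t) := \int \frac{z^2 t}{1+z^2 t}\, d\rho_L(z), \qquad \frac{\ln M(G_i,t)}{v(G_i)} \to F_L(t) := \frac{1}{2}\int \ln(1+z^2 t)\, d\rho_L(z).$$
Continuity of $p_L$ and $F_L$ in $t$ is a routine application of dominated convergence. Strict monotonicity follows by differentiating under the integral sign: the derivatives are integrals against $\rho_L$ of strictly positive continuous functions of $z$, hence positive as soon as $\rho_L$ has any mass away from $0$.

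For (c), pointwise convergence of the monotone functions $t\mapsto p(G_i,t)$ to the continuous strictly monotone limit $p_L$ is automatically uniform on every compact subinterval of $[0,\infty)$, by a standard Dini/P\'olya type argument for monotone functions; inverting yields $t(G_i,p)\to t_L(p)$ uniformly on compact subsets of $[0,p_0)$. For (d) I combine (b) and (c) via the identity
$$\lambda_G(p) = \frac{\ln M(G,t(G,p))}{v(G)} - \frac{1}{2}p\ln t(G,p),$$
valid for $0\leq p<p^*(G)$; substituting gives $\lambda_{G_i}(p)\to F_L(t_L(p))-\tfrac{1}{2}p\ln t_L(p)$ for $0<p<p_0$, while the endpoint $p=0$ is handled by the elementary observation that $p(G,t)=O(t)$ as $t\to 0$, so that $p\ln t\to 0$. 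The main subtlety throughout is excluding the degenerate case $\rho_L=\delta_0$ (in which strict monotonicity of the limits would collapse) and controlling the $p\ln t$ term near $t=0$; both are under control whenever the limit object has edges, which is the only regime of interest for the applications in this paper.
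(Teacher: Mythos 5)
Your overall strategy --- rewriting $p(G,t)$ and $\frac{\ln M(G,t)}{v(G)}$ as integrals of bounded continuous functions against the matching measure $\rho_G$ and then invoking Theorem~\ref{wc} together with the uniform compact support guaranteed by Theorem~\ref{Hei} --- is exactly the route the paper has in mind (the integral representations are recorded in the Remark of Section~\ref{measure}, and this is how the cited source \cite{ACH} proceeds). Parts (a) and (b) are correct modulo the degenerate case $\rho_L=\delta_0$, which you rightly flag as excluded.

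There is, however, a genuine gap in part (c), which then propagates to (d). To invert $p(G_i,\cdot)$ on all of $[0,p_0)$ you need the limit function $p_L(t)=\int\frac{tz^2}{1+tz^2}\,d\rho_L(z)$ to actually attain every value below $p_0$, i.e.\ you need $\lim_{t\to\infty}p_L(t)=1-\rho_L(\{0\})\ge p_0$. The hypothesis only gives $\rho_{G_i}(\{0\})=1-p^*(G_i)\le 1-p_0$ for each $i$, and weak convergence gives the portmanteau inequality for the closed set $\{0\}$ in the wrong direction: $\rho_L(\{0\})\ge\limsup_i\rho_{G_i}(\{0\})$, with no upper bound. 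A priori a positive fraction of the nonzero $\gamma_j(G_i)$ could tend to $0$, piling extra mass onto the atom at the origin; for $p$ strictly between $1-\rho_L(\{0\})$ and $p_0$ one would then have $t(G_i,p)\to\infty$ and the claimed convergence would fail. The crude a priori bounds do not exclude this: from $\sum_j\gamma_j(G_i)=e(G_i)$ and Heilmann--Lieb one only gets a lower bound on $1-\rho_L(\{0\})$ of order $p_0/(D-1)$, and Section~\ref{degenerate} of the paper shows that individual roots really can be exponentially small. Closing this gap is the substantive content of the corresponding argument in \cite{ACH}: one must prove that no extra mass accumulates at $z=0$ in the weak limit, i.e.\ $\rho_L(\{0\})=\lim_i\rho_{G_i}(\{0\})$, which amounts to the continuity of the matching ratio $\nu(G_i)/v(G_i)$ under Benjamini--Schramm convergence --- a nontrivial theorem (Elek--Lippner, Nguyen--Onak) that your proposal does not supply. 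Once that ingredient is in place, your P\'olya-type uniform convergence and inversion argument for (c), and the substitution into $\lambda_G(p)=\frac{\ln M(G,t(G,p))}{v(G)}-\frac{1}{2}p\ln t(G,p)$ for (d), do go through.
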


\begin{Def} Let $L$ be a random rooted graph which can be obtained as a limit of Benjamini--Schramm convergent graph sequence $(G_i)$.
Assume that $p^*(G_i)\geq p_0$ for all $i$. Let us define the function $p(L,t),t(L,p)$ and $\lambda_{L}(p)$ as the corresponding limits:
$$p(L,t)=\lim_{i\to \infty}p(G_i,t),\ \ \ t(L,p)=\lim_{i\to \infty}t(G_i,p), \ \ \mbox{and}\ \ \lambda_{L}(p)=\lim_{i\to \infty}\lambda_{G_i}(p),$$
where $t\in [0,\infty)$ and $p\in [0,p_0)$. Finally, let us define
$$\lambda_{L}(p_0)=\lim_{p\nearrow p_0}\lambda_L(p).$$
\end{Def}

\begin{Rem} Clearly, the functions $p(L,t),t(L,p)$ and $\lambda_{L}(p)$ do not depend on the choice of the sequence $(G_i)$ since if $(G_i)$ and $(H_i)$ are two different graph sequences Benjamini--Schramm converging to $L$ then they converge to $L$ even together.

Furthermore, if we can choose the graph sequence $(G_i)$ such that every graph $G_i$ contains a perfect matching then we can choose $p_0$ to be $1$, so we can define $\lambda_L(p)$ on the whole interval $[0,1]$.
\medskip

A simple calculation shows that if $G$ is finite graph then
$$p(G,t)=\int \frac{tz^2}{1+tz^2}\, d\rho_{G}(z)$$
and 
$$F(G,t)=\int \frac{1}{2}\ln\left(1+tz^2\right)\, d\rho_{G}(z)-\frac{1}{2}p(G,t) \ln(t).$$
Now if $(G_i)$ Benjamini--Schramm converges to $L$, then by Theorem~\ref{wc}, the sequence of measures $(\rho_{G_i})$ weakly converges to some measure which we will call $\rho_L$, the matching measure of the random rooted graph $L$. Consequently, for $t>0$, we have
$$p(L,t)=\int \frac{tz^2}{1+tz^2}\, d\rho_{L}(z)$$
and 
$$F(L,t)=\int \frac{1}{2}\ln\left(1+tz^2\right)\, d\rho_{L}(z)-\frac{1}{2}p(L,t) \ln(t).$$
This can be used as an alternative definition for the functions $p(L,t),t(L,p)$ and $\lambda_{L}(p)$. 
\medskip

Note that in general it is not true that
$$\lim_{i\to \infty}\lambda_{G_i}(1)=\lambda_{L}(1).$$
On the other hand, Theorem~\ref{pm-entropy} --the way it is given in Section~\ref{main}, and not in the Introduction-- asserts that it is true if all $G_i$ are vertex-transitive bipartite graphs.
\end{Rem}

\subsection{Inequalities for $t(G,p)$ and $p(G,t)$.} In this part we gather a few facts about the functions $t(G,p)$ and $p(G,t)$.
First, we gather a few facts about $M(G,t)$.

\begin{Lemma} \label{identities} Let $G$ be an arbitrary finite graph. Then\\
(a) $$\sum_{u\in V(G)}M(G-u,t)=v(G)\cdot M(G,t)-2t\cdot M'(G,t)$$ 
(b) $$\sum_{(u,v)\in E(G)}M(G-\{u,v\},t)=M'(G,t).$$
(c) $$M(G,t)M(G-\{u,v\},t)-M(G-u,t)M(G-v,t)=-\sum_{P\in \mathcal{P}_{u,v}}(-t)^{|P|-1}M(G\setminus P,t)^2,$$
where $\mathcal{P}_{u,v}$ is the set of paths connecting the vertices $u$ and $v$. 
\end{Lemma}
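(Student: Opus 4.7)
The plan is to treat (a) and (b) by direct coefficient bookkeeping, and (c) by a classical sign-reversing involution on pairs of matchings (or, as a fallback, induction via the vertex-deletion recursion).

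For (a), I would write $\sum_{u\in V(G)}M(G-u,t)=\sum_k t^k\sum_{u\in V(G)} m_k(G-u)$. Since a $k$-matching of $G-u$ is precisely a $k$-matching of $G$ not covering $u$, and every $k$-matching of $G$ fails to cover exactly $v(G)-2k$ vertices, the inner sum equals $(v(G)-2k)\,m_k(G)$. Summing reduces the identity to $v(G)M(G,t)-2\sum_k k\,m_k(G)\,t^k=v(G)M(G,t)-2tM'(G,t)$. For (b), I reorganize analogously: a $j$-matching of $G-\{u,v\}$ together with the distinguished edge $uv$ yields a $(j+1)$-matching of $G$, and every $(j+1)$-matching of $G$ arises from exactly $j+1$ such decompositions (one per choice of distinguished edge). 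Thus the double sum is $\sum_j (j+1)\,m_{j+1}(G)\,t^j=M'(G,t)$.

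For (c), the plan is an involution on pairs of matchings. Expand both products: $M(G,t)M(G-\{u,v\},t)$ contributes positively-signed pairs $(M_1,M_2)$ with $M_1$ a matching of $G$ and $M_2$ a matching of $G-\{u,v\}$, while $M(G-u,t)M(G-v,t)$ contributes negatively-signed pairs $(N_1,N_2)$ with $N_1$ a matching of $G-u$ and $N_2$ a matching of $G-v$. View each pair as a two-colored edge set whose symmetric difference decomposes into alternating paths and even cycles. The forbidden-vertex conditions force $u$ and $v$ to be isolated or endpoints of alternating paths; moreover the first product restricts the allowed color incident to $u$ and $v$ to one parity, while the second product forces the opposite parity. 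Consequently, pairs in which $u$ and $v$ lie on a common alternating path $P$ cannot be matched across the two products, and form the fixed points of the involution; enumerating these by first fixing $P$ and then summing over matchings of $G\setminus V(P)$ for the complementary edges produces the term $-(-t)^{|P|-1}M(G\setminus P,t)^2$ after tallying the $t$-weight of the $|P|-1$ edges of $P$ and the sign coming from the parity constraint. All other configurations are paired with a partner of opposite sign by swapping the two colors along a canonically chosen alternating component that is disjoint from the $u$--$v$ connection (for example, the component containing the lex-smallest eligible edge).

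The main obstacle will be (c), specifically the sign and weight bookkeeping: one must verify that the canonical color swap genuinely exchanges the two products, preserves $t^{|M_1|+|M_2|}$, and leaves exactly the $u$--$v$ alternating path configurations with coefficient $-(-t)^{|P|-1}$. If the involution proves too delicate, the cleaner fallback is induction on $v(G)$ using the vertex-deletion recursion $M(G,t)=M(G-w,t)+t\sum_{x\sim w}M(G-\{w,x\},t)$ applied at a neighbor $w$ of $u$; the inductive step reduces (c) to analogous identities on smaller graphs, and the sum over $\mathcal{P}_{u,v}$ decomposes naturally according to whether the path begins with the edge $uw$.
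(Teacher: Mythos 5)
Your arguments for (a) and (b) are correct and are precisely the ``simple double counting'' that the paper invokes; note that the paper offers no proof of this lemma beyond that remark and a citation of Heilmann--Lieb and Godsil for (c), so for part (c) you are supplying an argument where the paper supplies only a reference. The overall architecture of your proof of (c) is the standard combinatorial one and is sound: put the pairs $(M_1,M_2)$ with $M_1$ a matching of $G$ and $M_2$ a matching of $G-\{u,v\}$ in a set $S_1$ with sign $+$, the pairs $(N_1,N_2)$ with $N_1$ a matching of $G-u$ and $N_2$ a matching of $G-v$ in a set $S_2$ with sign $-$, and cancel by a sign-reversing involution whose fixed points are the configurations in which $u$ and $v$ lie on a common alternating path $P$; your enumeration of those fixed points, giving $(-1)^{|P|}t^{|P|-1}M(G\setminus P,t)^2$ according to the parity of the number of edges of $P$, is correct.

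The gap is in the pairing rule for the non-fixed configurations. Colour the first matching red and the second blue. Your parity claim is already off: in \emph{both} products the blue matching avoids $v$, so the constraint at $v$ is identical in $S_1$ and $S_2$; only the constraint at $u$ distinguishes them ($u$ is red-covered or uncovered in $S_1$, blue-covered or uncovered in $S_2$). Consequently the sign of a configuration is governed solely by what happens at $u$, and swapping the two colours on a component \emph{disjoint} from $u$ --- in particular on ``the component containing the lex-smallest eligible edge'' when that component misses $u$ --- leaves the pair in the same family with the same sign and cancels nothing. The involution is forced: one must swap the colours on the component containing $u$ (an alternating path with endpoint $u$, or $\{u\}$ itself, in which case the edge sets are unchanged but the $S_1$- and $S_2$-interpretations of the same configuration are exchanged). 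This swap is illegal exactly when that component is a $u$--$v$ alternating path, since the swap would then place a blue edge at $v$; those are the fixed points, and with this one correction your computation of their total contribution completes the proof. Your inductive fallback via the vertex-deletion recursion is also viable, but the corrected involution is the cleaner route.
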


Part (a) and (b) are simple double counting. Part (a) appears in the literature (see for instance \cite{god3}) in the form
$$\mu'(G,x)=\sum_{u\in V(G)}\mu(G-u,x).$$
Part (c) is due to Heilmann and Lieb \cite{hei} (see also \cite{god3}) in the form
$$\mu(G-u,x)\mu(G-v,x)-\mu(G,x)\mu(G-\{u,v\},x)=\sum_{P\in \mathcal{P}_{u,v}}\mu(G-P,x)^2.$$

If $G$ is a bipartite graph, then all terms of the right hand side of part (c) have the same signs. This is the key observation why the proofs of Theorem~\ref{stability}, Theorem~\ref{pm-entropy} and Theorem~\ref{zero-estimation} will work. If $G$ is a bipartite graph and $(u,v)\in E(G)$, then there is a trivial term on the right hand side of part (c), namely $tM(G-\{u,v\},t)^2$. Furthermore, in this case all $|P|$ are even, and we can rewrite part (c) as follows.
$$M(G,t)M(G-\{u,v\},t)-M(G-u,t)M(G-v,t)-tM(G-\{u,v\},t)^2=$$
$$=\sum_{P\in \mathcal{P}_{u,v} \atop P\neq (u,v)}t^{|P|-1}M(G\setminus P,t)^2.$$

\begin{Prop} \label{inequalities} (a) Let $G$ be a finite graph with a perfect matching. The function $t(1-p(G,t))$ (or $t(G,p)(1-p)$) is monotone increasing in $t$ (or $p$) and is bounded by a constant $C(G)$ depending on the graph $G$. \\
(b) If $G$ is a $d$--regular finite graph, then
$$p(G,t)\leq \frac{d\cdot t}{1+t}\leq d\cdot t.$$
In case of edge-transitive $d$--regular finite graphs, the inequality can be improved to
$$p(G,t)\leq \frac{d\cdot t}{1+d\cdot t}.$$
(c) If $G$ is a vertex-transitive $d$--regular bipartite graph, then 
$$t(G,p)\leq \frac{p}{d}\left(1-\frac{p}{d}\right)\cdot \frac{1}{(1-p)^2}\leq \frac{d-1}{d^2}\cdot \frac{1}{(1-p)^2}.$$
In fact, with the notation $t=t(G,p)$ we have
$$\frac{p}{d}\left(1-\frac{p}{d}\right)-t(1-p)^2\geq \frac{2}{d\cdot v(G)}\sum_{(u,v)\in E(G)}\left(\sum_{P\in \mathcal{P}_{u,v} \atop P\neq (u,v)}t^{|P|}\frac{M(G\setminus P,t)^2}{M(G,t)^2}\right).$$
Equality holds if $G$ is not only vertex-transitive, but also edge-transitive.
\end{Prop}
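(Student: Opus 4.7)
My plan for parts (a) and (b) is to exploit the Heilmann--Lieb factorization $M(G,t) = \prod_{i=1}^n (1+\gamma_i(G) t)$ with all $\gamma_i \geq 0$. A short computation gives $p(G,t) = \frac{1}{n}\sum_i \frac{\gamma_i t}{1+\gamma_i t}$ and $1-p(G,t) = \frac{1}{n}\sum_i \frac{1}{1+\gamma_i t}$. For (a), one then observes that $t(1-p(G,t)) = \frac{1}{n}\sum_i \frac{t}{1+\gamma_i t}$ is a sum of functions each monotone increasing in $t$ and each bounded above by $\gamma_i^{-1}$. Since $G$ has a perfect matching, $m_n(G) = \prod_i \gamma_i > 0$, so every $\gamma_i$ is strictly positive, and one may take $C(G) = \frac{1}{n}\sum_i \gamma_i^{-1} = m_{n-1}(G)/(n\, m_n(G))$. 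For (b), the map $x \mapsto \frac{xt}{1+xt}$ has second derivative $-2t^2(1+xt)^{-3} < 0$, so it is concave on $[0,\infty)$; since $\sum_i \gamma_i = m_1(G) = |E(G)| = nd$ has mean $d$, Jensen's inequality yields $p(G,t) \leq \frac{dt}{1+dt}$, which is the sharper edge-transitive bound. The weaker form $\frac{dt}{1+t}$ is then immediate from $1+dt \geq 1+t$ when $d \geq 1$.

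For part (c), the key input is the bipartite rewriting of the Heilmann--Lieb identity in Lemma~\ref{identities}(c): for every edge $(u,v)$,
$$M(G,t)M(G-\{u,v\},t) - M(G-u,t)M(G-v,t) =$$
$$= tM(G-\{u,v\},t)^2 + \sum_{\substack{P\in \mathcal{P}_{u,v} \\ P\neq (u,v)}} t^{|P|-1} M(G\setminus P,t)^2,$$
with every right-hand term non-negative thanks to bipartiteness. I will sum this identity over all $(u,v) \in E(G)$. By Lemma~\ref{identities}(b), $\sum_{(u,v)\in E(G)}M(G-\{u,v\},t) = M'(G,t)$, so the first left-hand term becomes $M(G,t)M'(G,t)$; vertex-transitivity together with Lemma~\ref{identities}(a) forces $M(G-w,t) = M(G,t)(1-p(G,t))$ for every vertex $w$, so the second left-hand term collapses to $|E(G)|\, M(G,t)^2(1-p)^2 = nd\, M(G,t)^2(1-p)^2$. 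After dividing through by $M(G,t)^2$ and using $tM'(G,t)/M(G,t) = np$, the summed identity reads
\begin{multline*}
\frac{np}{t} - nd(1-p)^2 = t\sum_{(u,v)\in E(G)}\frac{M(G-\{u,v\},t)^2}{M(G,t)^2} \\
+ \sum_{(u,v)\in E(G)}\sum_{\substack{P\in \mathcal{P}_{u,v} \\ P\neq (u,v)}} t^{|P|-1}\frac{M(G\setminus P,t)^2}{M(G,t)^2}.
\end{multline*}

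The final ingredient is Cauchy--Schwarz applied to the edge sum: $\sum_{(u,v)\in E(G)} M(G-\{u,v\},t)^2 \geq |E(G)|^{-1}\bigl(\sum_{(u,v)\in E(G)} M(G-\{u,v\},t)\bigr)^2 = M'(G,t)^2/(nd) = np^2 M(G,t)^2/(dt^2)$. Substituting this lower bound into the previous identity, multiplying through by $t/(nd)$, and using $1/n = 2/v(G)$ reproduces exactly the refined inequality in the statement, with Cauchy--Schwarz becoming an equality precisely when all $M(G-\{u,v\},t)$ coincide on edges, as happens under edge-transitivity. Discarding the non-negative path sum leaves $t \leq p(1-p/d)/(d(1-p)^2)$; for $d \geq 2$ the function $p \mapsto p(1-p/d)/d$ is increasing on $[0,1]$ and attains its maximum $(d-1)/d^2$ at $p=1$, yielding the crude bound $(d-1)/(d^2(1-p)^2)$. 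The main obstacle I anticipate is simply the bookkeeping in the edge summation and correctly identifying the resulting symmetric sums with the scalar quantities $p$ and $t$; no single step is analytically deep.
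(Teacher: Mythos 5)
Your proposal is correct. Parts (a) and (c) follow essentially the same route as the paper: for (a) the factorization $M(G,t)=\prod_i(1+\gamma_i t)$ and termwise monotonicity/boundedness, and for (c) the bipartite Heilmann--Lieb path identity summed over all edges, with vertex-transitivity collapsing the cross term to $nd\,M(G,t)^2(1-p)^2$ and Cauchy--Schwarz applied to $\sum_{(u,v)}M(G-\{u,v\},t)^2$; your bookkeeping reproduces the paper's computation exactly, including the equality case under edge-transitivity. The genuine divergence is in part (b). The paper argues combinatorially, via the deletion inequalities $M(G-\{u,v\},t)\le M(G-u,t)$ and $(1+t)M(G-\{u,v\},t)\le M(G,t)$, upgrading $1+t$ to $1+dt$ only when edge-transitivity forces the $d$ quantities $M(G-\{u,v_i\},t)$ to coincide. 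You instead apply Jensen's inequality to the concave function $x\mapsto \frac{xt}{1+xt}$ using $\frac{1}{n}\sum_i\gamma_i=\frac{m_1(G)}{n}=d$, which is valid (the $\gamma_i$ are nonnegative reals by Heilmann--Lieb, and the identity $p(G,t)=\frac{1}{n}\sum_i\frac{\gamma_i t}{1+\gamma_i t}$ is exactly the one used in part (a)). This is shorter and in fact proves the \emph{stronger} bound $p(G,t)\le \frac{dt}{1+dt}$ for every $d$-regular graph, with no edge-transitivity hypothesis; the paper's combinatorial route buys only the independence from the real-rootedness theorem. The one caveat is that your argument, as written with $n$ factors, presupposes $v(G)=2n$ (allowing some $\gamma_i=0$ if there is no perfect matching), which is the standing convention of the paper, so nothing is lost.
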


\begin{proof} (a) Let us write $M(G,t)$ into the form
$$M(G,t)=\prod_{i=1}^{v(G)/2}(1+\gamma_it),$$
where $\gamma_i$ are positive numbers according to the Heilmann-Lieb theorem.
Then
$$p(G,t)=\frac{2}{v(G)}\sum_{i=1}^{v(G)/2}\frac{\gamma_it}{1+\gamma_it}.$$
Hence
$$t(1-p(G,t))=\frac{2}{v(G)}\sum_{i=1}^{v(G)/2}\frac{t}{1+\gamma_it}.$$
Since all terms of the sum are monotone increasing function of $t$, we see that $t(1-p(G,t))$ is monotone increasing. Furthermore,
$$t(1-p(G,t))=\frac{2}{v(G)}\sum_{i=1}^{v(G)/2}\frac{t}{1+\gamma_it}\leq \frac{2}{v(G)}\sum_{i=1}^{v(G)/2}\frac{1}{\gamma_i}=C(G).$$
\medskip

\noindent (b) By part (b) of Lemma~\ref{identities} we have
$$p(G,t)=\frac{2}{v(G)}\cdot \frac{tM'(G,t)}{M(G,t)}=\frac{2}{v(G)}\cdot \frac{t}{M(G,t)}\sum_{(u,v)\in E(G)}M(G-\{u,v\},t).$$
Next we use the trivial inequality $M(G-\{u,v\},t)\leq M(G-u,t)$. For any edge $(u,v)\in E(G)$, we have
$$(1+t)M(G-\{u,v\},t)\leq M(G-u,t)+tM(G-\{u,v\},t)\leq $$
$$\leq M(G-u,t)+t\sum_{v_i\in N(u)}M(G-\{u,v_i\},t)=M(G,t).$$
Hence 
$$p(G,t)=\frac{2}{v(G)}\cdot \frac{t}{M(G,t)}\sum_{(u,v)\in E(G)}M(G-\{u,v\},t)\leq $$
$$\leq \frac{2}{v(G)}\cdot \frac{t}{M(G,t)} \frac{dv(G)}{2}\frac{M(G,t)}{1+t}=\frac{d\cdot t}{1+t}.$$
If $G$ is edge-transitive then we can use that
$$(1+d\cdot t)M(G-\{u,v\},t)\leq M(G-u,t)+d\cdot tM(G-\{u,v\},t)= $$
$$= M(G-u,t)+t\sum_{v_i\in N(u)}M(G-\{u,v_i\},t)=M(G,t)$$
to obtain
$$p(G,t)\leq \frac{d\cdot t}{1+d\cdot t}.$$
\medskip

\noindent (c) Let us introduce the notation $q=p/d$. For a moment let us assume that the graph $G$ is not only vertex-transitive, but also edge-transitive, so for arbitrary edges $(u,v),(u_1,v_1)\in E(G)$ we have $M(G-\{u,v\},t)=M(G-\{u_1,v_1\},t)$.
Then
$$q=\frac{p}{d}=\frac{1}{d\cdot n}\cdot \frac{tM'(G,t)}{M(G,t)}=\frac{t\cdot M(G-\{u,v\},t)}{M(G,t)}$$
for any edge $(u,v)\in E(G)$ by part (b) of Lemma~\ref{identities}. Furthermore,
$$1-d\cdot q=1-p=\frac{n\cdot M(G,t)-t\cdot M'(G,t)}{n M(G,t)}=\frac{M(G-u,t)}{M(G,t)}$$
for a vertex $u\in V(G)$ by part (a) of Lemma~\ref{identities} using the vertex transitivity.
Hence
$$r=q(1-q)-t(1-d\cdot q)^2=$$
$$=\frac{t(M(G,t)M(G-\{u,v\},t)-t\cdot M(G-\{u,v\},t)^2-M(G-u,t)M(G-v,t))}{M(G,t)^2}=$$
$$=\frac{t}{M(G,t)^2}\left(\sum_{P\in \mathcal{P}_{u,v} \atop P\neq (u,v)}t^{|P|-1}M(G\setminus P,t)^2\right)\geq 0.$$
We can eliminate the edge-transitivity from the argument (but still keeping the vertex-transitivity) if we average the above identity for all edges and we use a Cauchy-Schwarz inequality for the numbers $M(G-\{u,v\},t)$ ($(u,v)\in E(G))$. (The following computation is tedious, but contains no idea.)
$$0\leq \frac{1}{nd}\sum_{(u,v)\in E(G)}\frac{t}{M(G,t)^2}\left(\sum_{P\in \mathcal{P}_{u,v} \atop P\neq (u,v)}t^{|P|-1}M(G\setminus P,t)^2\right)=$$
$$=\frac{1}{nd}\frac{t}{M(G,t)^2}\sum_{(u,v)\in E(G)}\left(M(G,t)M(G-\{u,v\},t)-t\cdot M(G-\{u,v\},t)^2\right)-$$
$$-\frac{1}{nd}\frac{t}{M(G,t)^2}\sum_{(u,v)\in E(G)}M(G-u,t)M(G-v,t)=$$
$$=\frac{1}{nd}\frac{t}{M(G,t)^2}M(G,t)M'(G,t)-\frac{t^2}{M(G,t)^2}\frac{1}{nd}\sum_{(u,v)\in E(G)}M(G-\{u,v\},t)^2-$$
$$-\frac{t}{M(G,t)^2}M(G-u,t)^2\leq $$
$$\leq \frac{1}{nd}\frac{tM'(G,t)}{M(G,t)}-\frac{t^2}{M(G,t)^2}\left(\frac{M'(G,t)}{nd}\right)^2-\frac{t}{n^2\cdot M(G,t)^2}(n\cdot M(G,t)-t\cdot M'(G,t))^2=$$
$$=q(1-q)-t(1-d\cdot q)^2=r.$$
\end{proof}

The following proposition is just a reformulation of the part (c) of Proposition~\ref{inequalities}.

\begin{Prop} \label{tree} Let $G$ be $d$--regular vertex-transitive bipartite graph. Then
$$t(G,p)\leq t(\mathbb{T}_d,p)$$
for $0\leq p<1$ and 
$$p(G,t)\geq p(\mathbb{T}_d,t)$$
for $t\geq 0$.
\end{Prop}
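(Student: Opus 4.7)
The proposition is indeed (as the author claims) a rephrasing of Proposition~\ref{inequalities}(c), contingent on the single identity $t(\mathbb{T}_d,p) = h(p)$, where $h(p) := \frac{p(1-p/d)}{d(1-p)^2}$ is precisely the explicit upper bound that appears in Proposition~\ref{inequalities}(c). The plan is therefore (i) to verify $t(\mathbb{T}_d,p) = h(p)$, and (ii) to convert Proposition~\ref{inequalities}(c) into the claimed pair of inequalities using the mutual-inverse relation between $t(L,\cdot)$ and $p(L,\cdot)$, which holds equally for finite graphs and for the Benjamini--Schramm limit $\mathbb{T}_d$ (by the strict monotonicity in Theorem~\ref{entropy}).

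\textbf{Identifying $t(\mathbb{T}_d,p)$.} For step (i), I would fix any sequence $(H_i)$ of $d$-regular vertex-transitive bipartite graphs with $g(H_i)\to\infty$ (bipartite LPS Ramanujan Cayley graphs, for instance). Such a sequence Benjamini--Schramm converges to $\mathbb{T}_d$, so by Theorem~\ref{entropy}(c), $t(H_i,p)\to t(\mathbb{T}_d,p)$ for every $p\in[0,1)$. Applying the refined form of Proposition~\ref{inequalities}(c) to $H_i$ at $t = t(H_i,p)$ gives
$$h(p) - t(H_i,p)(1-p)^2 \geq \frac{2}{d\,v(H_i)} \sum_{(u,v)\in E(H_i)} \sum_{\substack{P\in\mathcal{P}_{u,v}\\ P\neq(u,v)}} t^{|P|}\frac{M(H_i\setminus P,t)^2}{M(H_i,t)^2}.$$
Every non-trivial path $P$ has length $\geq g(H_i)-1$; combined with the trivial bound $M(H_i\setminus P,t)\leq M(H_i,t)$ and the $d$-regular walk estimate $d(d-1)^{\ell-1}$ for the number of length-$\ell$ paths starting at a given vertex, the right-hand side is dominated by the geometric tail $\sum_{\ell\geq g(H_i)-1}(d-1)^{\ell-1}t^{\ell}$, which vanishes as $i\to\infty$ provided $(d-1)t(\mathbb{T}_d,p)<1$. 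Passing to the limit yields $t(\mathbb{T}_d,p) = h(p)$ on this initial $p$-subinterval. The identity then extends to all of $[0,1)$ by real-analyticity of both sides: the left-hand side via the representation $p(\mathbb{T}_d,t) = \int \frac{tx^2}{1+tx^2}\,d\rho_{\mathbb{T}_d}(x)$, where $\rho_{\mathbb{T}_d}$ is the compactly supported (Kesten--McKay type) matching measure; the right-hand side manifestly.

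\textbf{Deduction and main obstacle.} Once $t(\mathbb{T}_d,p) = h(p)$ is in hand, the first inequality of the proposition is exactly Proposition~\ref{inequalities}(c), and the second $p(G,t)\geq p(\mathbb{T}_d,t)$ follows by inverting the strictly increasing function $p(L,\cdot)$, which converts a pointwise inequality on inverses to the opposite pointwise inequality on the originals. The only step that is not purely formal is the analytic extension of $t(\mathbb{T}_d,p) = h(p)$ from the small-$t$ regime to all of $[0,1)$, since the geometric bound on the error term only vanishes when $(d-1)t < 1$. An alternative route that avoids analyticity is to integrate the derivative identity $\lambda_L'(p) = -\tfrac{1}{2}\ln t(L,p)$ of Proposition~\ref{asymp}(b): the inequality $t(H_i,p)\leq h(p)$ integrates (using $-\tfrac{1}{2}\ln h(p) = \G_d'(p)$, a direct calculation) to $\lambda_{H_i}(p)\geq\G_d(p)$, and then Theorem~\ref{entropy-girth} plus a careful limit-swap pins down $\lambda_{\mathbb{T}_d}(1) = \G_d(1)$, forcing the monotone non-negative difference $\lambda_{\mathbb{T}_d}-\G_d$ (vanishing at both endpoints of $[0,1]$) to be identically zero, equivalent to $t(\mathbb{T}_d,p) = h(p)$ throughout.
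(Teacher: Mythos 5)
Your reduction of the proposition to the single identity $t(\mathbb{T}_d,p)=\frac{p(d-p)}{d^2(1-p)^2}$, and the inversion of the strictly monotone density function to pass from $t(G,p)\leq t(\mathbb{T}_d,p)$ to $p(G,t)\geq p(\mathbb{T}_d,t)$, are exactly what the paper does. The difference is that the paper simply \emph{cites} this formula from \cite{csi}, where it is obtained by a direct computation on the tree, whereas you spend all your effort re-deriving it as a limit over large-girth vertex-transitive graphs.

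That derivation, in its primary form, has a genuine gap: Proposition~\ref{inequalities}(c) bounds $\frac{p}{d}\left(1-\frac{p}{d}\right)-t(1-p)^2$ only \emph{from below} by the path sum, with equality reserved for edge-transitive graphs. Showing that the path sum is dominated by a geometric tail that vanishes as $g(H_i)\to\infty$ therefore proves nothing about convergence of $t(H_i,p)$ to $h(p)$: from $A_i\geq B_i\geq 0$ and $B_i\to 0$ one cannot conclude $A_i\to 0$. To close this you would either have to take the $H_i$ edge-transitive as well (so that equality holds in Proposition~\ref{inequalities}(c)) -- and then justify the existence of edge- and vertex-transitive bipartite $d$-regular graphs of unbounded girth for \emph{every} $d$, which LPS graphs do not supply -- or control the Cauchy--Schwarz defect, i.e.\ the variance of $M(G-\{u,v\},t)/M(G,t)$ over edges, which your argument never addresses. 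Your fallback route (integrating $\lambda'=-\tfrac12\ln t$, checking $-\tfrac12\ln h(p)=\G_d'(p)$, and forcing the monotone non-negative gap $\lambda_{\mathbb{T}_d}-\G_d$ to vanish identically because it vanishes at $p=1$ by Theorem~\ref{entropy-girth}) is logically sound, but the ``careful limit-swap'' it requires, $\lim_i\lambda_{H_i}(1)=\lambda_{\mathbb{T}_d}(1)$, is essentially the content of Theorem~\ref{pm-entropy} and its uniform-integrability argument; it is not circular, but it is far heavier machinery than the one-line tree computation the paper invokes.
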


\begin{proof}
It is known (see \cite{csi}) that
$$t(\mathbb{T}_d,p)=\frac{p(d-p)}{d^2(1-p)^2},$$
so the inequality 
$$t(G,p)\leq t(\mathbb{T}_d,p)$$
is just a reformulation of Proposition~\ref{inequalities}.
The other inequality immediately follows from the first one.
We note that
$$p(\mathbb{T}_d,t)=\frac{2d^2t+d-d\cdot\sqrt{1+4(d-1)t}}{2d^2t+2}.$$
\end{proof}

\begin{Rem} The part (b) of Proposition~\ref{inequalities} is only useful for very small values of $t$ and $p$ since if $t\geq 1/d$, then the inequality is trivial.

We would like to point out an interesting dichotomy between finite graphs and infinite lattices. Part (a) shows that 
$$t(G,p)\geq \frac{c}{(1-p)}$$
if $p\geq p_0$ and $c=t(G,p_0)(1-p_0)$, where $p_0$ is an arbitrary positive number. On the other hand,
$$t(G,p)\leq \frac{C(G)}{(1-p)},$$
where
$$C(G)=\frac{2}{v(G)}\sum_{i=1}^{v(G)/2}\frac{1}{\gamma_i}.$$
We mention that if $(G_n)$ converges to an infinite lattice $L$, then the sequence $(C(G_n))$ is not necessarily bounded. So for an infinite lattice $L$, it is not necessarily true that
$$t(L,p)\leq \frac{C(L)}{(1-p)}.$$
In fact, the $d$-regular infinite tree $\mathbb{T}_d$ is already a counterexample.

On the other hand, part (c) of Proposition~\ref{inequalities} and Proposition~\ref{tree} shows that for vertex-transitive $d$--regular bipartite graphs, we have 
$$t(G,p)\leq t(\mathbb{T}_d,p) \leq \frac{d-1}{d^2}\cdot \frac{1}{(1-p)^2}.$$
This shows that if $L$ is the limit of a sequence of $d$--regular vertex-transitive bipartite graphs (like $\mathbb{Z}^d$), then
$$t(L,p)\leq t(\mathbb{T}_d,p)\leq \frac{d-1}{d^2}\cdot \frac{1}{(1-p)^2}.$$
We will prove a matching lower bound for certain random rooted graph (in particular infinite lattices), see Proposition~\ref{dichotomy}. This shows that for infinite lattices, the growth of $t$ can be as fast as $c/(1-p)^2$ unlike in the case of finite graphs.
\end{Rem}

\begin{Prop} \label{dichotomy} Let $L$ be a random rooted graph which can be obtained as a limit of bounded degree finite graphs with perfect matchings. Assume that the measure $\rho_L$ is absolutely continuous to the Lebesgue measure, and has a density function $f(z)$ such that
$$\min_{|z|\leq \varepsilon}f(z)\geq f_0>0$$
for some $\varepsilon$ and $f_0$. Then for $t\geq \frac{1}{\varepsilon^2}$ we have
$$t\geq \frac{f_0^2}{(1-p)^2},$$
where $p=p(L,t)$.
\end{Prop}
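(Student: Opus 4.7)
The plan is to exploit the integral representation of $p(L,t)$ for a random rooted graph recalled earlier in the paper, namely
$$p(L,t)=\int \frac{tz^2}{1+tz^2}\,d\rho_L(z),$$
which rearranges to
$$1-p(L,t)=\int \frac{1}{1+tz^2}\,d\rho_L(z).$$
Using the absolute continuity hypothesis to rewrite $d\rho_L(z)=f(z)\,dz$, this integral equals $\int \frac{f(z)}{1+tz^2}\,dz$. The first step is then to discard all of the integration outside $[-\varepsilon,\varepsilon]$ (every term being non-negative) and to apply the pointwise bound $f(z)\geq f_0$ on the remaining interval, giving
$$1-p(L,t)\geq f_0\int_{-\varepsilon}^{\varepsilon}\frac{dz}{1+tz^2}.$$

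The resulting integral is elementary: the substitution $u=\sqrt{t}\,z$ yields
$$\int_{-\varepsilon}^{\varepsilon}\frac{dz}{1+tz^2}=\frac{2\arctan(\varepsilon\sqrt{t})}{\sqrt{t}}.$$
Here the hypothesis $t\geq 1/\varepsilon^2$ is precisely what forces $\varepsilon\sqrt{t}\geq 1$, so that $\arctan(\varepsilon\sqrt{t})\geq \pi/4$. Combining these estimates gives
$$1-p\geq \frac{\pi f_0}{2\sqrt{t}}\geq \frac{f_0}{\sqrt{t}},$$
which rearranges to the advertised inequality $t\geq f_0^2/(1-p)^2$.

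I do not foresee any genuine obstacle: the entire argument is a direct use of the integral formula for $p(L,t)$, followed by a one-line antiderivative. The role of the threshold $t\geq 1/\varepsilon^2$ is simply to pin the arctan factor above a universal constant, and the slack $\pi/2>1$ leaves room for sharper statements if one were willing to enlarge the interval of positivity or refine the lower bound on $f$. For the proposition as stated, no such refinement is needed.
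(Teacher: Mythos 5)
Your proof is correct and follows essentially the same route as the paper: both start from $1-p=\int\frac{1}{1+tz^2}\,d\rho_L(z)$, restrict attention to a neighbourhood of $0$ where $f\geq f_0$, and deduce $\sqrt{t}(1-p)\geq f_0$. The only (immaterial) difference is that you evaluate the integral over $[-\varepsilon,\varepsilon]$ exactly via $\arctan$ and invoke $t\geq 1/\varepsilon^2$ to get $\arctan(\varepsilon\sqrt t)\geq\pi/4$, whereas the paper cuts down to $|z|\leq 1/\sqrt t$ and uses the cruder bound $\frac{1}{1+tz^2}\geq\frac12$ there.
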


\begin{proof}
$$\sqrt{t}(1-p)=\sqrt{t}\int \frac{1}{1+tz^2}\, d\rho_L(z)\geq \sqrt{t}\int_{\{|z|\leq 1/\sqrt{t}\}} \frac{1}{1+tz^2}\, d\rho_L(z)\geq $$
$$\geq \sqrt{t}\int_{\{|z|\leq 1/\sqrt{t}\}} \frac{1}{2}\, d\rho_L(z)\geq \sqrt{t}\cdot \frac{1}{2}\cdot \frac{2}{\sqrt{t}}f_0=f_0.$$
In the last step we have used that for $|z|\leq \frac{1}{\sqrt{t}}\leq \varepsilon$, we have $f(z)\geq f_0$. 
\end{proof}

\begin{Rem} We conjecture that for all $d$, the lattice $\mathbb{Z}^d$ satisfies the condition of the proposition.
\end{Rem} 

\subsection{Vertex-transitivity} By vertex-transitivity we always mean that for every vertex $u$ and $v$, there exists an automorphism $\phi$ of the graph $G$ such that $\phi(u)=v$. In this paper we only use the vertex-transitivity to ensure that
$$M(G-u,t)=M(G-v,t)$$
for every $u$ and $v$. On the other hand, for bipartite graphs there is a natural variant of vertex-transitivity when we only require that the automorphism group of the graph acts transitively on the color classes separately. Apriori this would only give that 
$$M(G-u,t)=M(G-v,t)$$
holds true when $u$ and $v$ belong to the same color class of the bipartite graph. It turns out that for balanced bipartite graphs, this implies that
$$M(G-u,t)=M(G-v,t)$$
for every $u$ and $v$. As a corollary, this weaker variant of the vertex-transitivity can be used everywhere in this paper for $d$--regular bipartite graphs.

\begin{Lemma} \label{balanced} Let $G=(A,B,E)$ be a balanced bipartite graph, i. e., $|A|=|B|$. Then
$$\sum_{u\in A}M(G-u,t)=\sum_{v\in B}M(G-v,t).$$
\end{Lemma}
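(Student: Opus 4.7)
The plan is to prove the stated identity by comparing coefficients of $t^k$ on both sides, using a simple double-counting argument that exploits the bipartite structure.

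First, I would expand $M(G-u,t) = \sum_k m_k(G-u) t^k$, so that
$$\sum_{u\in A} M(G-u,t) = \sum_{k\geq 0} \left(\sum_{u\in A} m_k(G-u)\right) t^k,$$
and similarly for the sum over $B$. Hence it suffices to show that for every $k$,
$$\sum_{u\in A} m_k(G-u) = \sum_{v\in B} m_k(G-v).$$

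Next I would interpret $\sum_{u\in A} m_k(G-u)$ as the number of pairs $(u, M)$ where $u \in A$ and $M$ is a $k$-matching of $G$ avoiding $u$. Swapping the order of summation, this equals $\sum_{M} |A \setminus V(M)|$, where $M$ ranges over $k$-matchings of $G$. Here is the only place bipartiteness is used: since every edge of $G$ has one endpoint in $A$ and one in $B$, a $k$-matching covers exactly $k$ vertices in $A$ (and exactly $k$ in $B$). Therefore $|A \setminus V(M)| = |A| - k$ for every $k$-matching $M$, giving
$$\sum_{u\in A} m_k(G-u) = (|A|-k)\, m_k(G).$$
The identical argument on the other side yields $\sum_{v\in B} m_k(G-v) = (|B|-k)\, m_k(G)$, and since $|A|=|B|$, the two expressions are equal. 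Summing over $k$ with weights $t^k$ gives the lemma. There is no real obstacle here; the argument is a one-step double counting and the balancedness hypothesis is used precisely once, at the end.
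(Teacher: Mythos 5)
Your proof is correct and is essentially the same double-counting argument as the paper's: the paper writes $\sum_{u\in A}M(G-u,t)=\sum_{M}(|A|-|M|)t^{|M|}$ directly at the level of generating functions, which is exactly your coefficient-wise identity $\sum_{u\in A}m_k(G-u)=(|A|-k)m_k(G)$ summed against $t^k$. Both use bipartiteness in the same single place (a $k$-matching covers exactly $k$ vertices of $A$) and conclude from $|A|=|B|$.
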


\begin{proof} Let $\mathcal{M}$ be the set of matchings, and for $M\in \mathcal{M}$, let $|M|$ denote the number of edges in $M$. Then
$$\sum_{u\in A}M(G-u,t)=\sum_{M\in \mathcal{M}}(|A|-|M|)t^{|M|}=\sum_{M\in \mathcal{M}}(|B|-|M|)t^{|M|}=\sum_{v\in B}M(G-v,t).$$
\end{proof}

 Since every $d$--regular bipartite graph is balanced, the following statement is an immediate corollary.
 
\begin{Cor} Let $G=(A,B,E)$ be a $d$--regular bipartite graph such that for every $u,u'\in A$ and $v,v'\in B$ there are automorphisms $\phi_1,\phi_2$ of the graph $G$ such that $\phi_1(u)=u'$ and $\phi_2(v)=v'$. Then for every $u,v\in G$ we have
$$M(G-u,t)=M(G-v,t).$$
\end{Cor}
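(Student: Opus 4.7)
The proof should be short, combining the within-class transitivity with Lemma~\ref{balanced}. Here is the plan.

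First I would observe that any $d$-regular bipartite graph is balanced: counting edges with one endpoint in each class gives $d|A|=|E(G)|=d|B|$, so $|A|=|B|=:n$. Next, the hypothesis gives automorphisms moving any vertex of $A$ to any other vertex of $A$, and similarly for $B$. Since a graph automorphism $\phi$ induces an isomorphism $G-w \cong G-\phi(w)$, matching polynomials are preserved, so
$$M(G-u,t)=M(G-u',t) \qquad \text{for all } u,u'\in A,$$
and the analogous identity holds for all pairs in $B$. Thus $M(G-w,t)$ depends only on which color class $w$ belongs to; call these two common values $\alpha(t)$ (for $w\in A$) and $\beta(t)$ (for $w\in B$).

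Now I would invoke Lemma~\ref{balanced}, which applies precisely because $G$ is balanced bipartite. It yields
$$\sum_{u\in A}M(G-u,t)=\sum_{v\in B}M(G-v,t),$$
and substituting the constant values gives $n\,\alpha(t)=n\,\beta(t)$, hence $\alpha(t)=\beta(t)$. This is exactly the desired conclusion $M(G-u,t)=M(G-v,t)$ for all $u,v\in V(G)$. There is no real obstacle here; the only point to verify carefully is that the automorphisms respect the bipartition well enough to give within-class equality (which is automatic, since each individual automorphism induces an isomorphism of vertex-deleted subgraphs regardless of how it acts on the bipartition), and that $d$-regularity forces $|A|=|B|$ so Lemma~\ref{balanced} is applicable.
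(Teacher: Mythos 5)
Your proof is correct and matches the paper's intended argument: the paper presents this as an immediate corollary of Lemma~\ref{balanced} (together with the observation, made in the surrounding discussion, that within-class transitivity makes $M(G-w,t)$ constant on each color class and that $d$-regularity forces $|A|=|B|$). Nothing is missing.
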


\section{Perfect matchings of vertex-transitive graphs} \label{main}

In this part we prove Theorem~\ref{pm-entropy}, \ref{zero-estimation} and \ref{ratio}. First we prove Theorem~\ref{zero-estimation}. For sake of convenience we repeat the statement of the theorem with an extra claim showing its connection with the matching measure.
\bigskip

\noindent \textbf{Theorem~\ref{zero-estimation}} \textit{Let $G$ be a vertex-transitive bipartite $d$--regular graph on $2n$ vertices. Then
$$\gamma_k(G)\geq \frac{d^2}{4(d-1)}\frac{k^2}{n^2}.$$
Consequently, for the matching measure $\rho_G$ we have
$$\rho_G([-s,s])\leq \frac{2\sqrt{d-1}}{d}s$$
for all $s\in \mathbb{R}^+$.}
\bigskip

\begin{proof} Recall that for a fix $t$, we have defined 
$$p=p(G,t)=\frac{t\cdot M'(G,t)}{n\cdot  M(G,t)},$$
and in part (c) of Proposition~\ref{inequalities} we have proved that for a vertex-transitive $d$--regular bipartite graph we have
$$t=t(G,p)\leq \frac{p}{d}\left(1-\frac{p}{d}\right)\cdot \frac{1}{(1-p)^2}\leq \frac{d-1}{d^2}\cdot \frac{1}{(1-p)^2}.$$
We will use it in the form
$$t(1-p)^2\leq \frac{d-1}{d^2}.$$
Note that
$$p(G,t)=\frac{1}{n}\sum_{i=1}^{n}\frac{\gamma_it}{1+\gamma_it}.$$
Hence
$$t(1-p)^2=t\left(\frac{1}{n}\sum_{i=1}^{n}\frac{1}{1+\gamma_it}\right)^2\geq t\left(\frac{1}{n}\sum_{i=1}^{k}\frac{1}{1+\gamma_it}\right)^2.$$
Now let $t=\frac{1}{\gamma_k}$, then
$$\frac{d-1}{d^2}\geq t(1-p)^2\geq \frac{1}{\gamma_k}\left(\frac{1}{n}\sum_{i=1}^{k}\frac{1}{1+\frac{\gamma_i}{\gamma_k}}\right)^2\geq \frac{1}{\gamma_k}\left(\frac{k}{2n}\right)^2.$$
In other words,
$$\gamma_k(G)\geq \frac{d^2}{4(d-1)}\frac{k^2}{n^2}.$$
The second claim follows since
$$\rho_G([-s,s])=\frac{1}{2n}|\{ k\ |\ \pm \sqrt{\gamma_k} \in [-s,s]\}|=\frac{1}{n}|\{ k\ |\ \gamma_k\leq s^2\}|.$$
Since
$$\frac{d^2}{4(d-1)}\frac{k^2}{n^2}\leq \gamma_k(G)\leq s^2$$
we have
$$\frac{k}{n}\leq \frac{2\sqrt{d-1}}{d}s.$$
\end{proof}

\begin{Rem}[\cite{ACFK}] Let $(G_i)$ be a sequence of $d$--regular graphs such that $g(G_i)\to \infty$, where $g(H)$ denotes the length of the shortest cycle of a graph $H$. Then $(G_i)$ Benjamini--Schramm converges to the infinite $d$--regular tree $\mathbb{T}_d$. The limit measure $\rho_{\mathbb{T}_d}$ is the Kesten--McKay measure. In general, the matching measure and the spectral measure coincides for (finite and infinite) trees. The  density function of the Kesten--McKay measure is the following
$$f_d(x)=\frac{d \sqrt{4(d-1)-x^2}}{2\pi (d^2-x^2)}\chi_{[-\omega,\omega]},$$
where $\omega=2\sqrt{d-1}$. So the value of the density function at point $0$ is 
$$f_d(0)=\frac{1}{\pi}\cdot \frac{\sqrt{d-1}}{d},$$
this is only multiplicative constant factor away from the bound appearing in Theorem~\ref{zero-estimation}.
\end{Rem}

\noindent \textbf{Theorem~\ref{pm-entropy}} {Let $(G_i)$ be a Benjamini--Schramm convergent sequence of vertex-transitive bipartite $d$--regular graphs. Then the sequence 
$$\lambda_{G_i}(1)=\frac{\ln \prm(G_i)}{v(G_i)}$$
is convergent. Furthermore, if $G_i$ converges to some random rooted graph $L$, then we have
$$\lim_{i\to \infty}  \lambda_{G_i}(1)=\lambda_{L}(1).$$}
\bigskip

\begin{proof} Let $2n_i$ be the number of vertices of the graph $G_i$, and 
$$M(G_i,t)=\sum_{k=0}^{n_i}m_k(G)t^k=\prod_{j=1}^{n_i} (1+\gamma_j(G_i)t),$$
where $\gamma_1(G_i)\leq \gamma_2(G_i)\leq \dots \leq \gamma_{n_i}(G_i)$.
Let $\overline{\rho}_{G_i}$ be the uniform measure on the numbers $\gamma_j(G_i)$, and let $\rho_{G_i}$ be the matching measure of $G_i$. By Theorem~\ref{wc}, the sequence of matching measures $(\rho_{G_i})$ is weakly convergent. This implies that the sequence $(\overline{\rho}_{G_i})$ is weakly convergent too, let $\overline{\rho}_L$ be the limit measure.
Note that Theorem~\ref{zero-estimation} implies that
$$\overline{\rho}_{G_i}([0,t])=\frac{1}{n_i}|\{j \ |\ \gamma_j(G_i)\leq t\}|\leq \frac{2\sqrt{d-1}}{d} \sqrt{t}$$
since 
$$\gamma_j(G_i)\geq \frac{d^2}{4(d-1)}\frac{j^2}{n_i^2}.$$
Because of the weak convergence, this inequality holds for $\overline{\rho}_L$ too. This implies that $\ln(x)$ is uniformly integrable: let $F(t)=\rho([0,t])$ for some measure satisfying the above inequality, and assume $\varepsilon\leq 1$, then integration by parts imply that
$$\left|\int_0^{\varepsilon} \ln(x)\, d\rho(x)\right|=\int_0^{\varepsilon}(-\ln(x))\, dF(x)=$$
$$=\left.F(x)(-\ln(x))\right|_{0}^{\varepsilon}-\int_0^{\varepsilon}F(x)\, d(-\ln(x))\leq F(\varepsilon)\ln\left(\frac{1}{\varepsilon}\right)+\int_0^{\varepsilon}\frac{F(x)}{x}\, dx\leq $$
$$\leq \frac{2\sqrt{d-1}}{d}\left(\sqrt{\varepsilon}\ln\left(\frac{1}{\varepsilon}\right)+\int_0^{\varepsilon}\frac{\sqrt{x}}{x}\, dx\right)=
\frac{2\sqrt{d-1}}{d}\left(\sqrt{\varepsilon}\ln\left(\frac{1}{\varepsilon}\right)+2\sqrt{\varepsilon}\right),$$
which tends to $0$ if $\varepsilon$ tends to $0$. Since
$$\frac{\ln \prm(G_i)}{v(G_i)}=\frac{1}{2}\int \ln(x)\, d\overline{\rho}_{G_i}(x),$$
it immediately implies that 
$$\lim_{i\to \infty}\frac{\ln \prm(G_i)}{v(G_i)}=\frac{1}{2}\int \ln(x)\, d\overline{\rho}_L.$$
\end{proof}

\begin{Cor} Let $(G_i)$ be a Benjamini--Schramm convergent sequence of vertex-transitive bipartite $d$--regular graphs. Let $H_i$ be another Benjamini--Schramm convergent sequence of $d$--regular graphs such that the sequences $(G_i)$ and $(H_i)$ are Benjamini--Schramm convergent together. Then 
$$\limsup_{i\to \infty} \frac{\ln \prm(H_i)}{v(H_i)}\leq \lim_{i\to \infty} \frac{\ln \prm(G_i)}{v(G_i)}.$$
\end{Cor}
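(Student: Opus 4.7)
The plan is to express both sides as integrals against matching measures and invoke Theorem~\ref{zero-estimation} on the auxiliary vertex-transitive sequence $(G_i)$ to tame the logarithmic singularity at $0$ that would otherwise obstruct a direct application of weak convergence. For any finite graph $H$ with $v(H)=2n$, the zeros of $\mu(H,x)$ are $\pm\sqrt{\gamma_j(H)}$, so
\[
\int\ln|z|\, d\rho_H(z)=\frac{1}{2n}\sum_{j=1}^{n}\ln\gamma_j(H)=\frac{\ln\prm(H)}{v(H)}
\]
(interpreting $\ln 0=-\infty$ when $\prm(H)=0$). Since $(G_i)$ and $(H_i)$ are Benjamini--Schramm convergent together, they share a common limit $L$, and Theorem~\ref{wc} gives $\rho_{G_i}\to\rho_L$ and $\rho_{H_i}\to\rho_L$ weakly. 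The proof of Theorem~\ref{pm-entropy} already identifies $\lim_i \frac{\ln\prm(G_i)}{v(G_i)}$ with $\int\ln|z|\, d\rho_L(z)$, so the target reduces to
\[
\limsup_{i\to\infty}\int\ln|z|\, d\rho_{H_i}(z)\;\le\;\int\ln|z|\, d\rho_L(z).
\]

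To handle the singularity of $\ln|z|$ at $z=0$, I would introduce the bounded continuous truncation $f_\epsilon(z)=\ln\max(|z|,\epsilon)$ on the Heilmann--Lieb interval $[-2\sqrt{d-1},2\sqrt{d-1}]$, which contains the supports of every matching measure in sight. Since $\ln|z|\le f_\epsilon(z)$ pointwise, weak convergence against the bounded continuous $f_\epsilon$ yields
\[
\limsup_i\int\ln|z|\, d\rho_{H_i}\;\le\;\limsup_i\int f_\epsilon\, d\rho_{H_i}=\int f_\epsilon\, d\rho_L,
\]
so it remains to show $\int f_\epsilon\, d\rho_L\to \int\ln|z|\, d\rho_L$ as $\epsilon\to 0$.

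The key step, and the main obstacle, is transferring the spectral density bound $\rho_{G_i}([-s,s])\le\frac{2\sqrt{d-1}}{d}\,s$ of Theorem~\ref{zero-estimation} to the limit measure $\rho_L$. I would accomplish this by the standard Portmanteau argument on the slightly enlarged open intervals $(-s-\delta,s+\delta)$ followed by $\delta\to 0$, yielding $\rho_L([-s,s])\le\frac{2\sqrt{d-1}}{d}\,s$. With this uniform bound on the distribution function of $\rho_L$ at the origin in hand, the same integration-by-parts estimate carried out in the proof of Theorem~\ref{pm-entropy} gives $\int_{|z|\le\epsilon}|\ln|z||\, d\rho_L\to 0$ as $\epsilon\to 0$, hence $\int f_\epsilon\, d\rho_L\to \int\ln|z|\, d\rho_L$, which finishes the argument. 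It is precisely here that the vertex-transitivity assumption on the auxiliary sequence $(G_i)$ is used in an essential way: without it Theorem~\ref{zero-estimation} is unavailable for $(G_i)$, and the constructions in Section~\ref{degenerate} show that the corresponding matching entropy limit can genuinely fail to exist, so one really does need the vertex-transitive sequence as a witness.
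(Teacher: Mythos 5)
Your proof is correct and takes essentially the route the paper intends: the corollary is stated without proof precisely because it follows from the machinery in the proof of Theorem~\ref{pm-entropy}, namely that $(G_i)$ and $(H_i)$ share the limit measure $\rho_L$, that the truncation $\ln\max(|z|,\epsilon)$ combined with weak convergence gives the one-sided bound for $(H_i)$, and that the estimate $\rho_L([-s,s])\leq \frac{2\sqrt{d-1}}{d}s$ inherited from Theorem~\ref{zero-estimation} via the Portmanteau argument supplies exactly the uniform integrability of $\ln|z|$ already used there. Your write-up fills in these details accurately; nothing needs to be changed.
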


Finally, we prove Theorem~\ref{ratio}. For the convenience of the Reader, we repeat the statement.
\bigskip

\noindent \textbf{Theorem~\ref{ratio}} {Let $G$ be a $d$--regular vertex-transitive bipartite graph on $2n$ vertices. Then
$$\frac{m_{n-1}(G)}{m_n(G)}\leq \frac{2}{d}n^2.$$}
\bigskip

\begin{proof} Once again, we use the identity of part (c) of Lemma~\ref{identities}:
$$M(G,t)M(G-\{u,v\},t)-M(G-u,t)M(G-v,t)=-\sum_{P\in \mathcal{P}_{u,v}}(-t)^{|P|-1}M(G\setminus P,t)^2.$$
We apply it for $(u,v)\in E(G)$ again. Then all coeffcients on the right hand side are non-negative. Let us consider the coefficient of $t^{2n-2}$:
$$m_n(G)\cdot m_{n-2}(G-\{u,v\})+m_{n-1}(G)\cdot m_{n-1}(G-\{u,v\})-m_{n-1}(G-u)\cdot m_{n-1}(G-v)\geq 0.$$
Let us use the identity of part (a) of Lemma~\ref{identities} together with the fact that $G$ is vertex-transitive:
$$m_{n-1}(G-u)=m_{n-1}(G-v)=\frac{1}{n}m_{n-1}(G).$$
Hence
$$m_n(G)\cdot m_{n-2}(G-\{u,v\})+m_{n-1}(G)\cdot m_{n-1}(G-\{u,v\})\geq \left(\frac{1}{n}m_{n-1}(G)\right)^2.$$
Now let us sum this inequality for all $(u,v)\in E(G)$ using the fact that
$$\sum_{(u,v)\in E(G)}m_{n-2}(G-\{u,v\})=(n-1)m_{n-1}(G)\ \ \mbox{and} \sum_{(u,v)\in E(G)}m_{n-1}(G-\{u,v\})=n\cdot m_{n}(G).$$
Hence we get that
$$m_n(G)\cdot (n-1)m_{n-1}(G)+m_{n-1}(G)\cdot n\cdot m_{n}(G)\geq nd\cdot \left(\frac{1}{n}m_{n-1}(G)\right)^2.$$
Then
$$\frac{n(2n-1)}{d}\geq \frac{m_{n-1}(G)}{m_n(G)}.$$
\end{proof}

\section{Federbush--expansion and Gurvits's theorem} \label{gurvits}

In this part we prove Theorem~\ref{stability}. As we mentioned in the Introduction,
in \cite{gur2} L. Gurvits proved Friedland's asymptotic lower matching conjecture appearing in \cite{FKM}, which says that if
$G$ is a $d$--regular bipartite graph on $v(G)$ vertices, then
$$\frac{\ln m_k(G)}{v(G)}\geq \G_d(p)+o_{v(G)}(1),$$
where $p=2k/v(G)$. Recall that
$$\G_d(p)= \frac{1}{2}\left(p\ln \left(\frac{d}{p}\right)+(d-p)\ln \left(1-\frac{p}{d}\right)-2(1-p)\ln (1-p)\right).$$
We also noted in the Introduction that there are two inconvenient things in this statement. Namely, the term $o_{v(G)}(1)$, and that $p$ is defined only for special values. It turns out that the two problems are in fact one. If we choose the activity $t$ such that
$$2k/v(G)=p=p(G,t),$$
then
$$\frac{\ln m_k(G)}{v(G)}\approx \lambda_G(p)$$
by part (d) of Proposition~\ref{asymp}.
Hence by part (d) and (f) of Proposition~\ref{asymp}, one can rewrite Gurvits's theorem as follows. (For more detailed explanation, see Section 3 of \cite{csi}.)

\begin{Th}[Gurvits \cite{gur2} (not in this form)] Let $G$ be an arbitrary finite $d$--regular bipartite graph.
Then
$$\lambda_G(p)\geq \G_d(p).$$
\end{Th}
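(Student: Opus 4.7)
The plan is to translate the original Gurvits inequality---which holds only at the discrete densities $p = k/n$ and carries an error term $o_{v(G)}(1)$---into the cleaner pointwise statement $\lambda_G(p) \geq \G_d(p)$, valid throughout $[0, p^*(G)]$. No new matching-theoretic input is needed beyond the original theorem; the remaining work is purely analytic, and is bundled by Proposition~\ref{asymp}.

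First I would fix any $p \in [0, p^*(G))$ and invoke the disjoint-union identity of part (a) of Proposition~\ref{asymp}: for every positive integer $r$, the graph $rG$ satisfies $\lambda_{rG}(p) = \lambda_G(p)$ and $p^*(rG) = p^*(G)$, yet has $v(rG) = r\,v(G) \to \infty$. This single device achieves two things simultaneously---it densifies the allowed rational densities $k/(rn)$ inside $[0, p^*(G)]$, and it drives the error term in Gurvits's original inequality to zero. Then I would choose $k_r \leq r\nu(G)$ with $p_r := k_r/(rn) \to p$. Applying Gurvits's original theorem to $rG$ yields
$$\frac{\ln m_{k_r}(rG)}{v(rG)} \geq \G_d(p_r) + o_r(1),$$
while part (d) of Proposition~\ref{asymp} gives $|\lambda_{rG}(p_r) - \ln m_{k_r}(rG)/v(rG)| \leq \ln v(rG)/v(rG) = o_r(1)$. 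Combined with part (a), this yields
$$\lambda_G(p_r) \geq \G_d(p_r) + o_r(1).$$

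The step I would expect to be the main (if minor) obstacle is the passage $r \to \infty$: it rests on the continuity of $\lambda_G$ on $[0, p^*(G))$, which is not quite a tautology but follows from the continuity and strict monotonicity of $t \mapsto p(G, t)$ (hence of its inverse $t(G, \cdot)$) together with the continuity of $F(G, t)$. Granted continuity of $\G_d$ as well, letting $r \to \infty$ so that $p_r \to p$ makes the $o_r(1)$ evaporate and we obtain $\lambda_G(p) \geq \G_d(p)$, as required. Part (f) of Proposition~\ref{asymp} is precisely the compact encapsulation of this argument, so in practice the entire proof collapses to two lines invoking parts (a), (d), and (f). The endpoint $p = p^*(G)$ is then handled automatically by the limit definition $\lambda_G(p^*) = \lim_{p \nearrow p^*} \lambda_G(p)$ furnished by part (c).
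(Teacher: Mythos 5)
Your proposal is correct and is essentially the paper's own argument: the paper derives this form of Gurvits's theorem precisely by combining parts (d) and (f) of Proposition~\ref{asymp} (deferring details to \cite{csi}), and your use of the disjoint-copy trick from part (a) is exactly the mechanism underlying part (f). You have simply unpacked the same two-line proof into its constituent steps.
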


Federbush and his coauthors suggested a related idea developed in a series of papers (see for instance \cite{BFP,Fed1,FF}), namely they suggested to investigate the following expansion:
$$\lambda_G(p)=\frac{1}{2}\left(p\ln \left(\frac{d}{p}\right)-2 (1-p)\ln (1-p)-p+d\sum_{k=2}^{\infty}\frac{a_k}{k(k-1)}\left(\frac{p}{d}\right)^k\right).$$
Comparing Gurvits's theorem and the Federbush-expansion, we see that they differ slightly. Using the Taylor-expansion
$$(1-t)\cdot \ln (1-t)=-t+\sum_{k=2}^{\infty}\frac{1}{k(k-1)}t^k$$
we can see that the following identity holds:
$$p\ln \left(\frac{d}{p}\right)-2(1-p)\ln (1-p)+(d-p)\ln \left(1-\frac{p}{d}\right)=$$
$$=p\ln \left(\frac{d}{p}\right)-2 (1-p)\ln (1-p)-p+d\sum_{k=2}^{\infty}\frac{1}{k(k-1)}\left(\frac{p}{d}\right)^k.$$
This suggests that maybe it would be better to consider the following modified Federbush-expansion:
$$\lambda_G(p)=\frac{1}{2}\left(p\ln \left(\frac{d}{p}\right)+(d-p)\ln \left(1-\frac{p}{d}\right)-2(1-p)\ln (1-p)+d\sum_{k=2}^{\infty}\frac{b_k}{k(k-1)}\left(\frac{p}{d}\right)^k\right).$$
Therefore $b_k=a_k-1$.

It is known that for the $d$--regular infinite tree $\mathbb{T}_d$, we have $b_k\equiv 0$, in other words
$$\lambda_{\mathbb{T}_d}(p)=\G_d(p)$$
It is also known that if $g$ is the length of the shortest non-trivial cycle, then $b_2=b_3=\dots =b_{g-1}=0$. Butera, Federbush and Pernici \cite{BFP} computed the first few elements of $a_k$ for various lattices including $\mathbb{Z}^d$ for small $d$. For instance, for the lattice $\mathbb{Z}^2$ they obtained that $a_2=1,a_3=1,a_4=7,a_5=41,a_6=181,a_7=757,...$. (In other words, $b_2=0,b_3=0,b_4=6,b_5=40,b_6=180,b_7=756,\dots$.) They conjectured that all $a_k$ are positive for $\mathbb{Z}^d$, and it might be true for more general bipartite lattices. We note that the corresponding statement is not true for the $4$-cycle and for the $3$--regular complete bipartite graph on $6$ vertices. This conjecture would imply that if we consider the function $g_d(p)=\lambda_{\mathbb{Z}^d}(p)-\G_{2d}(p)$ then the $k$-th derivative  $g^{(k)}_d(p)$ are non-negative for all $k$. We were not able to settle this conjecture even for $\mathbb{Z}^2$, still it is a very inspiring one. We will show that at least the first derivative is indeed non-negative and it is true in a more general setting.

We will prove  Theorem~\ref{stability}, for sake of convenience we repeat the statement.
\bigskip

\noindent \textbf{Theorem~\ref{stability}.} \textit{Let $G$ be a finite $d$--regular vertex-transitive bipartite graph, where $d\geq 2$. Furthermore, let the gap function $g(p)$ be defined as
$$g(p)=\lambda_G(p)-\G_d(p).$$
Then $g(p)$ is monotone increasing function with $g(0)=0$, in particular $g(p)$ is non-negative. Furthermore, if $G$ contains an $\ell$-cycle, then
$$g(p)\geq \int_0^pf(x)^{\ell}\, dx,$$
where
$$f(x)=\frac{1}{4d}\min(x,(1-x)^2).$$}
\bigskip

\begin{Rem} A bipartite $d$--regular graph always contains a perfect matching, so $p^*=1$ in this case. We also mention that a connected vertex-transitive graph on even number of vertices always contains a perfect matching, while if it has an odd number of vertices then it contains a matching which avoid exactly one vertex.
\end{Rem}

\begin{proof}[Proof of Theorem~\ref{stability}.] As before we use the notation $n=v(G)/2$.

The claim $g(0)=0$ is trivial, so first we only need to prove that $g'(p)\geq 0$.  Let us differentiate the function 
$\lambda_G(p)$ with respect to $p$. By part (b) of Proposition~\ref{asymp} we have
$$\frac{d\lambda_G(p)}{dp}=-\frac{1}{2}\ln(t).$$
On the other hand, by differentiating 
$$\lambda_G(p)=\frac{1}{2}\left(p\ln \left(\frac{d}{p}\right)+(d-p)\ln \left(1-\frac{p}{d}\right)-2(1-p)\ln (1-p)\right)+g(p)$$
with respect to $p$, we get that 
$$-\frac{1}{2}\ln(t)=\frac{d\lambda_G(p)}{dp}=\frac{1}{2}\left(\ln(d)-\ln(p)-\ln \left(1-\frac{p}{d}\right)+2\ln(1-p)\right)+g'(p).$$
Hence
$$g'(p)=\frac{1}{2}\ln \left(\frac{1}{t}\cdot \frac{p}{d}\left(1-\frac{p}{d}\right)\frac{1}{(1-p)^2}\right).$$
Now we immediately see that $g'(p)\geq 0$ by part (c) of Proposition~\ref{inequalities}.

In the next step we prove that short cycles increase the function $g(p)$. As a first step we refine our lower bound for $g'(p)$.
It will be a bit more convenient to carry out the computation if we introduce the notation $q=\frac{p}{d}$. (Note that it is suggested by the Federbush--expansion too.)
Then
$$g'(p)=\frac{1}{2}\ln \left( \frac{q(1-q)}{t\cdot (1-d\cdot q)^2}\right).$$
It is also worth introducing the notation
$$r=q(1-q)-t(1-d\cdot q)^2.$$
Since then
$$g'(p)=\frac{1}{2}\ln \frac{1}{1-\frac{r}{q(1-q)}}\geq \frac{1}{2}\frac{r}{q(1-q)}>\frac{1}{2}d\cdot r.$$
We have seen that $r\geq 0$ as it is  exactly the claim of part (c) of Proposition~\ref{inequalities}:
$$r=\frac{p}{d}\left(1-\frac{p}{d}\right)-t(1-p)^2\geq \frac{2}{d\cdot v(G)}\sum_{(u,v)\in E(G)}\left(\sum_{P\in \mathcal{P}_{u,v} \atop P\neq (u,v)}t^{|P|}\frac{M(G\setminus P,t)^2}{M(G,t)^2}\right)\geq 0.$$
\bigskip

Now we will show that if $G$ contains a cycle of length $\ell$ then
$$g'(p)\geq f(p)^{\ell},$$
where
$$f(x)=\frac{1}{4d}\min(x,(1-x)^2).$$
This will follow from the following inequality:
$$M(G\setminus S,t)(1+d\cdot t)^{|S|}\geq M(G,t).$$
This inequality holds true since every matching of $G$ can be obtained from a matching of $G\setminus S$ plus at most one-one edges incident to every element of $S$. Hence
$$\frac{M(G\setminus S,t)}{M(G,t)}\geq \frac{1}{(1+d\cdot t)^{|S|}}.$$
We will use it to $S=P$, where $P$ is a "short" cycle minus an edge.
Assume that the  length of the shortest cycle is $\ell$. We will call a cycle of size $\ell$ a short cycle. 
Note that every vertex is contained in a short cycle by the vertex-transitivity. This means that at least $2/d$ fraction of the edges are contained in a short cycle, since a cycle goes through two edges at a vertex. Hence
$$g'(p)\geq \frac{1}{2}d\cdot r\geq \frac{1}{2}d\cdot \frac{2}{d}\frac{t^{\ell}}{(1+d\cdot t)^{2\ell}}=\left(\frac{t}{(1+d\cdot t)^2}\right)^{\ell}.$$
Now we bound the function $t/(1+d\cdot t)^2$ according to $d\cdot t\leq 1$ or $d\cdot t>1$. If $d\cdot t\leq 1$, then we use part (b) of Proposition~\ref{inequalities}:
$$\frac{t}{(1+d\cdot t)^2}\geq \frac{t}{4}\geq \frac{p}{4d}.$$
If $d\cdot t>1$, then we use part (c) of Proposition~\ref{inequalities}:
$$\frac{t}{(1+d\cdot t)^2}\geq \frac{t}{(d\cdot t+d\cdot t)^2}=\frac{1}{4d^2}\cdot \frac{1}{t}\geq \frac{1}{4d^2}\cdot \frac{d^2}{d-1}(1-p)^2
\geq \frac{1}{4d}(1-p)^2.$$
Hence
$$\frac{t}{(1+d\cdot t)^2} \geq \frac{1}{4d}\min(p,(1-p)^2).$$
Therefore
$$g(p)\geq \int_0^pf(x)^{\ell}\, dx.$$
\end{proof}

\begin{Rem} Naturally, the statement of Theorem~\ref{stability} remains true for those infinite lattices $L$ which can be obtained as a limit of vertex-transitive bipartite graphs. This is a trivial consequence of Theorem~\ref{entropy}.
\end{Rem}

\begin{Rem} In particular applications, for instance in case of $\mathbb{Z}^3$, it is not really worth using the lower bound
$$g(p)\geq \int_0^p f(x)^{\ell}\, dx.$$
The reason is that one can compute the function $\lambda_L(p)$ quite precisely if $p$ is bounded away from $1$. This can be done exactly the same way as the monomer-dimer entropy was computed in \cite{ACH}. If $p$ is close to $1$, then it is not really easy to compute $\lambda_L(p)$. This is due to the fact that the function $\ln |x|$ is not easy to approximate by polynomials. Still it is useful to compute $g(p)$ with high precision where we can do it, and then use it as a lower bound for $g(1)$. This way we obtain a lower bound for $\lambda_L(1)$.
\end{Rem}

\section{Degenerate graphs} \label{degenerate}

In this part we show that in Theorem~\ref{stability}, \ref{zero-estimation} and \ref{ratio}, the condition vertex-transitivity is indeed necessary in the sense that there are $d$--regular bipartite graphs for which $g'(p_0)<0$ for some $p_0$ unlike in Theorem~\ref{stability}, and $\gamma_1$ is much smaller than in Theorem~\ref{zero-estimation}, and finally the ratio $\frac{m_{n-1}(G)}{m_n(G)}$  can be much bigger than in Theorem~\ref{ratio}.
\bigskip

Given a  finite bipartite $d$--regular graph $G$ and an edge $e$ of $G$, let
$p(e)$ be the probability that a uniform random perfect matching contains $e$. The following theorem was proved in \cite{ACFK}.
The consequence of this theorem was that Theorem~\ref{pm-entropy} is not true without vertex-transitivity.

\begin{Th}[\cite{ACFK}] \label{construction} For any integer $d\ge 3$, there exists a constant $0<c<1$ such that
  for any positive integer $n\geq d$ there exists  a $d$--regular bipartite
  simple graph on $2n$ points with an edge $e$ such that 
	$$p(e)>1-c^n.$$
\end{Th}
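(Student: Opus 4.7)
The goal is to exhibit, for each $d\geq 3$ and each $n\geq d$, a $d$--regular bipartite simple graph $G$ on $2n$ vertices together with an edge $e=(u,v)$ satisfying $p(e)>1-c^n$ for a constant $c=c(d)<1$. Since $\prm(G)=\prm(G-e)+\prm(G-\{u,v\})$,
\[
p(e)=\frac{\prm(G-\{u,v\})}{\prm(G-e)+\prm(G-\{u,v\})},
\]
so the target is equivalent to showing that the ratio $\prm(G-e)/\prm(G-\{u,v\})$ can be made smaller than $c^n/(1-c^n)$. The plan is a gadget-plus-chain construction.

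Step 1 (asymmetric gadget): fix a $d$--regular bipartite graph $H$ on $2m$ vertices, with $m=m(d)$ a constant, containing a distinguished edge $e_0=(u_0,v_0)$ such that
\[
\beta:=\frac{\prm(H-e_0)}{\prm(H-\{u_0,v_0\})}<1,
\]
i.e., $e_0$ has matching probability strictly greater than $1/2$ inside $H$. Since the average edge probability in a $d$--regular bipartite graph equals $1/d\leq 1/3$, such an $H$ cannot be edge-transitive; an explicit small example (verifiable for $d=3$ on a few dozen vertices, then extended to larger $d$ by a degree blow-up) suffices. The existence of this gadget is the principal non-trivial ingredient.

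Step 2 (cyclic chain and transfer matrix): form $r\geq 2$ disjoint copies $H^{(1)},\dots,H^{(r)}$ of $H$, delete each $e_0^{(i)}$, and add the bridging edges $f_i=(u_0^{(i)},v_0^{(i+1)})$, indices modulo $r$. The resulting $G$ is $d$--regular, bipartite and simple on $2mr$ vertices. A perfect matching of $G$ is encoded by the indicators $\sigma_i\in\{0,1\}$ of whether $f_i$ is used, together with local matchings inside each $H^{(i)}-e_0^{(i)}$. The four possible local matching counts form a $2\times 2$ transfer matrix whose entries $\prm(H-u_0)$ and $\prm(H-v_0)$ vanish automatically (a graph with an odd number of vertices has no perfect matching), so the transfer matrix is diagonal; only the all-bridges ($\sigma_i\equiv 1$) and no-bridges ($\sigma_i\equiv 0$) configurations survive, yielding
\[
\prm(G)=\prm(H-\{u_0,v_0\})^r+\prm(H-e_0)^r.
\]
Taking $e=f_1$, a perfect matching contains $e$ iff it is of all-bridges type, so
\[
p(e)=\frac{1}{1+\beta^r}>1-\beta^r.
\]
Setting $r=\lfloor n/m\rfloor$ yields $p(e)>1-c^n$ with $c=\beta^{1/m}<1$.

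For $n$ not a multiple of $m$, the residue is absorbed by attaching a bounded-size $d$--regular bipartite ``filler'' component via a small local surgery along the chain, at worst shrinking $c$ by a fixed factor. For small $n$ in the range $d\leq n<2m$ the bound $1-c^n$ is weak (of order $1-c^d$) and is satisfied by any edge of any $d$--regular bipartite graph on $2n$ vertices provided $c$ is chosen so that $1-c^d\leq 1/d$, since the average edge probability equals $1/d$. The principal obstacle is the existence of the asymmetric gadget $H$ in Step 1: this is genuinely a feature of $d$--regular bipartite graphs lacking edge-transitivity, and can be verified either by explicit small-graph constructions or by a probabilistic argument on random $d$--regular bipartite graphs carrying a planted structural imbalance.
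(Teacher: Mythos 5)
This theorem is quoted by the paper from \cite{ACFK} without proof, so your argument has to stand on its own. Your amplification machinery (Steps 2--3) is essentially sound: the parity observation that $\prm(H-e_0-u_0)=0$ forces the bridge indicators $\sigma_i$ to be constant around the cycle, the identity $\prm(G)=\prm(H-\{u_0,v_0\})^r+\prm(H-e_0)^r$ is correct, and the bookkeeping for $n\not\equiv 0 \pmod m$ and for small $n$ is routine (though your condition ``$1-c^d\leq 1/d$'' is the wrong inequality: since $1-c^n$ \emph{grows} with $n$, the exceptional range $d\leq n<2m$ requires $1-c^{2m}\leq 1/d$, which is still achievable by taking $c$ close enough to $1$). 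So you have correctly reduced the theorem to Step 1.

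The genuine gap is Step 1: the existence of a constant-size $d$--regular bipartite gadget $H$ with an edge of matching probability exceeding $1/2$ is exactly the nontrivial content of the theorem, and you assert it rather than prove it. None of the justifications you gesture at works as stated. Every vertex- or edge-transitive candidate ($K_{d,d}$, the cube, the Heawood graph, \dots) gives probability exactly $1/d\leq 1/3$; random $d$--regular bipartite graphs Benjamini--Schramm converge to $\mathbb{T}_d$ and have all edge probabilities near $1/d$ (this is the Schrijver--Valiant tightness discussed in the Introduction), so ``a probabilistic argument with a planted imbalance'' names the missing construction rather than supplying it; and ``extending to larger $d$ by a degree blow-up'' is undefined, since adding a perfect matching or tensoring does not control $p(e)$. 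The gap is real but fillable, and naive attempts sit exactly at the threshold: taking $u_0\sim v_0,b_1,b_2$ and $v_0\sim u_0,a_1,a_2$ with two copies of $K_{3,3}-e$ (ports $a_i,b_i$) gives $p(e_0)=16/32$, i.e.\ exactly $1/2$. One working gadget for $d=3$: let $\hat Y$ be two copies of $K_{3,3}-e$ joined by the two edges $(a_1,b_2)$, $(a_2,b_1)$ (so $\prm(\hat Y)=16+4=20$), let $Y=\hat Y-(a_1,b_2)$ with ports $a_1,b_2$, so $\prm(Y)=16$ and $\prm(Y-\{a_1,b_2\})=4$; then the $26$-vertex graph built from $u_0,v_0$ and two copies of $Y$ as above has $p((u_0,v_0))=\frac{16^2}{16^2+2\cdot 4\cdot 16}=\frac{2}{3}>\frac12$, and the same two-level pattern with $K_{d,d}-e$ blocks gives $p(e_0)=\frac{d-1}{d}$ for general $d$. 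Until some such gadget is exhibited and verified, your write-up establishes nothing beyond the (correct) reduction.
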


Note that for any vertex $v$, we have 
$$\sum_{f: v\in f}p(f)=1.$$
In particular, for an edge $f$ incident to an edge $e$ of Theorem~\ref{construction}, we have $p(f)<c^n$.
\bigskip

Let us introduce
$$s(G)=\frac{m_{n-1}(G)}{n\cdot m_n(G)}.$$
The following proposition is trivial, but important.

\begin{Prop} \label{trivial}
$$\gamma_1\leq \frac{1}{s(G)}\leq n\gamma_1.$$
\end{Prop}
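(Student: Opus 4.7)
The plan is to read off $m_n(G)$ and $m_{n-1}(G)$ directly from the factorization $M(G,t)=\prod_{i=1}^n(1+\gamma_i(G)t)$ and then use that the harmonic mean of the $\gamma_i$ is squeezed between $\gamma_1$ and $n\gamma_1$.

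First I would expand the product to identify the two top coefficients: the coefficient of $t^n$ is $\prod_{i=1}^n\gamma_i(G)$, so $m_n(G)=\prod_{i=1}^n\gamma_i(G)$, while the coefficient of $t^{n-1}$ is obtained by omitting one factor at a time, giving
$$m_{n-1}(G)=\sum_{j=1}^n\prod_{i\neq j}\gamma_i(G)=m_n(G)\sum_{j=1}^n\frac{1}{\gamma_j(G)}.$$
Dividing yields the key identity
$$\frac{m_{n-1}(G)}{m_n(G)}=\sum_{j=1}^n\frac{1}{\gamma_j(G)},$$
which incidentally is the identity already used in the paragraph preceding Theorem~\ref{ratio}. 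Consequently
$$\frac{1}{s(G)}=\frac{n\cdot m_n(G)}{m_{n-1}(G)}=\frac{n}{\sum_{j=1}^n 1/\gamma_j(G)},$$
the harmonic mean of $\gamma_1(G),\dots,\gamma_n(G)$.

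To finish, I would use the ordering $0<\gamma_1\leq\gamma_2\leq\dots\leq\gamma_n$ (positivity follows from the Heilmann--Lieb theorem, Theorem~\ref{Hei}). The upper bound $1/\gamma_j\leq 1/\gamma_1$ summed over $j$ gives $\sum_j 1/\gamma_j\leq n/\gamma_1$, hence $1/s(G)\geq\gamma_1$; the lower bound obtained by keeping only the $j=1$ term, $\sum_j 1/\gamma_j\geq 1/\gamma_1$, gives $1/s(G)\leq n\gamma_1$. Putting these two inequalities together yields exactly $\gamma_1\leq 1/s(G)\leq n\gamma_1$, as claimed. There is no real obstacle here: the whole statement is a one-line consequence of Newton's identities for elementary symmetric polynomials once one notices that $m_{n-1}/m_n$ is the sum of reciprocals of the $\gamma_i$.
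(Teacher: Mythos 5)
Your proof is correct and follows essentially the same route as the paper: both rest on the identity $m_{n-1}(G)/m_n(G)=\sum_{i=1}^n 1/\gamma_i(G)$ read off from the factorization of $M(G,t)$, followed by the trivial bounds $1/\gamma_1\leq\sum_i 1/\gamma_i\leq n/\gamma_1$. (Only a pedantic caveat: Heilmann--Lieb gives $\gamma_i\geq 0$, and strict positivity comes from $m_n(G)>0$, which is implicit in the definition of $s(G)$.)
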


\begin{proof} 
$$s(G)=\frac{1}{n}\sum_{i=1}^n\frac{1}{\gamma_i}.$$
Hence
$$\frac{1}{n \gamma_1}\leq s(G)\leq \frac{1}{\gamma_1}.$$
\end{proof}

\begin{Prop} \label{construction2} Let $G$ be a $d$--regular bipartite graph on $2n$ vertices.  Let $e=(u,v)\in E(G)$, and let $p(e)$ denote the probability that it is contained in a uniform random perfect matching. There exists a bipartite $d$--regular graph $G^*$ on $2(dn+1)$ vertices for which
$$s(G^*)\geq \frac{1}{d(dn+1)}\left(\frac{1}{p(e)}-1\right).$$
\end{Prop}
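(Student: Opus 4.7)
\medskip

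\noindent\textbf{Proof plan for Proposition~\ref{construction2}.} The plan is to build $G^*$ by gluing $d$ copies of $G$ with the edge $e=(u,v)$ removed, attaching two new vertices that restore regularity, and then to count $m_N(G^*)$ exactly and lower-bound $m_{N-1}(G^*)$ by a single convenient family of near-perfect matchings.

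Concretely, let $G_1,\dots,G_d$ be $d$ disjoint copies of $G-e$, with $u_i,v_i$ denoting the copies of $u,v$ in $G_i$. Since $G$ is bipartite with classes $A\ni u$, $B\ni v$, introduce two new vertices $w\in B$ and $w'\in A$, and join $w$ to each $u_i$ and $w'$ to each $v_i$. One checks immediately that $G^*$ is bipartite, has $2(dn+1)$ vertices, and is $d$-regular: inside each copy only $u_i$ and $v_i$ lost a degree (since $e$ was removed), and each gains back one via $w$ or $w'$, while $w$ and $w'$ have degree $d$ by construction. Set $N=dn+1$.

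For perfect matchings of $G^*$, the vertex $w$ must be matched to some $u_i$ and $w'$ to some $v_j$. A parity/size count shows that if $i\neq j$, the remaining vertices of the $i$-th copy would form an unbalanced bipartite graph (sizes $n-1$ and $n$), so no perfect matching exists; hence $i=j$. Given $i=j$, the $i$-th copy contributes $\prm(G-\{u,v\})$ and each of the other $d-1$ copies contributes $\prm(G-e)$, so
\[
m_N(G^*)=d\cdot \prm(G-\{u,v\})\cdot \prm(G-e)^{d-1}.
\]
Now $\prm(G-\{u,v\})=p(e)\,\prm(G)$ and $\prm(G-e)=(1-p(e))\,\prm(G)$, so $m_N(G^*)=d\,p(e)\,(1-p(e))^{d-1}\,\prm(G)^d$. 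For a lower bound on $m_{N-1}(G^*)$, I use the single family of near-perfect matchings in which both $w$ and $w'$ are the uncovered vertices: these are precisely the perfect matchings of the disjoint union $\bigsqcup_{i=1}^d (G_i-e)$, giving
\[
m_{N-1}(G^*)\ \geq\ \prm(G-e)^d=(1-p(e))^d\,\prm(G)^d.
\]

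Dividing by $N\cdot m_N(G^*)$ yields
\[
s(G^*)=\frac{m_{N-1}(G^*)}{N\cdot m_N(G^*)}\ \geq\ \frac{(1-p(e))^d\,\prm(G)^d}{(dn+1)\cdot d\cdot p(e)(1-p(e))^{d-1}\prm(G)^d}
=\frac{1}{d(dn+1)}\left(\frac{1}{p(e)}-1\right),
\]
which is exactly the claimed bound. There is no real obstacle: the only point requiring care is the parity argument forcing $w$ and $w'$ to match into the same copy, which is immediate from the balance of the bipartition; everything else is bookkeeping using the identities $\prm(G-\{u,v\})=p(e)\prm(G)$ and $\prm(G-e)=(1-p(e))\prm(G)$.
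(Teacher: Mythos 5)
Your construction and counting are exactly those of the paper: $d$ copies of $G-e$ glued to two new vertices, the exact formula $m_N(G^*)=d\,\prm(G-\{u,v\})\,\prm(G-e)^{d-1}$, and the lower bound $m_{N-1}(G^*)\geq \prm(G-e)^d$ via matchings missing both new vertices. The argument is correct and matches the paper's proof essentially step for step (the paper phrases the final simplification via $m_n(G-e)=m_n(G)-m_{n-1}(G-\{u,v\})$ rather than your equivalent identities $\prm(G-\{u,v\})=p(e)\prm(G)$, $\prm(G-e)=(1-p(e))\prm(G)$).
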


\begin{proof}
Let us take $d$ copies of $G-e$, and two new vertices $u^*$ and $v^*$. Let us connect  $u^*$ with the vertices corresponding to $v$ in each copy of $G-e$. Similarly, let us connect  $v^*$ with the vertices corresponding to $u$ in each copy of $G-e$. Then the obtained graph $G^*$ is a $d$--regular bipartite graph on $2(dn+1)$ vertices. Note that each perfect matching of $G^*$ consists of an edge pair $(u^*,v_i)$, $(v^*,u_i)$, $d-1$ perfect matchings of $G-e$ and one perfect matching of $G_i-\{u_i,v_i\}$. Hence
$$m_{dn+1}(G^*)=dm_{n-1}(G-\{u,v\})m_n(G-e)^{d-1}.$$
On the other hand,
$$m_{dn}(G^*)\geq m_{dn}(G^*-\{u^*,v^*\})=m_n(G-e)^{d}.$$
Hence,
$$s(G^*)=\frac{1}{dn+1}\cdot \frac{m_{dn}(G^*)}{m_{dn+1}(G^*)}\geq \frac{1}{d(dn+1)}\frac{m_{n}(G-e)}{m_{n-1}(G-\{u,v\})}=$$
$$=\frac{1}{d(dn+1)}\frac{m_{n}(G)-m_{n-1}(G-\{u,v\})}{m_{n-1}(G-\{u,v\})}=\frac{1}{d(dn+1)}\left(\frac{1}{p(e)}-1\right)$$
since 
$$p(e)=\frac{m_{n-1}(G-\{u,v\})}{m_n(G)}.$$
\end{proof}

\begin{Prop} For every integer $d\geq 3$ there exists a sequence of $d$--regular bipartite graphs $(H_i)$ and a constant $c<1$ for which $\gamma_1(H_i)<c^{v(H_i)}$. Furthermore, for every $H_i$ there exists  some $p_0=p_0(H_i)$ such that for the derivative of the gap function $g(p)$, we have $g'(p_0)<0$.
\end{Prop}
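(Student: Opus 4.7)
The plan is to build the sequence $(H_i)$ by feeding the ``degenerate'' graphs of Theorem~\ref{construction} into Proposition~\ref{construction2}. Fix $d\ge 3$ and let $0<c_0<1$ be the constant furnished by Theorem~\ref{construction}. For each large enough $n$ this gives a $d$-regular bipartite graph $G=G(n)$ on $2n$ vertices and an edge $e\in E(G)$ with $p(e)>1-c_0^{\,n}$. Since $\sum_{f\ni u}p(f)=1$ at either endpoint $u$ of $e$, any edge $f\neq e$ incident to $e$ satisfies $p(f)<c_0^{\,n}$. I would then apply Proposition~\ref{construction2} to the pair $(G,f)$ to produce a $d$-regular bipartite graph $H=H(n)$ on $v(H)=2(dn+1)$ vertices satisfying
$$s(H)\ \ge\ \frac{1}{d(dn+1)}\!\left(\frac{1}{p(f)}-1\right)\ \ge\ \frac{c_0^{-n}-1}{d(dn+1)}.$$
Proposition~\ref{trivial} then gives $\gamma_1(H)\le 1/s(H)\le d(dn+1)/(c_0^{-n}-1)$, and for $n$ large enough the polynomial prefactor is absorbed by slightly inflating the base: one can pick $c\in(c_0,1)$ depending only on $d$ so that $\gamma_1(H)<c^{v(H)}$. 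Setting $H_i=H(n_i)$ for an increasing sequence $n_i$ gives the first conclusion.

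For the second conclusion I would use the formula for $g'(p)$ extracted inside the proof of Theorem~\ref{stability}. Combining Proposition~\ref{asymp}(b), $\lambda_{H_i}'(p)=-\tfrac12\ln t(p)$, with the direct differentiation $\G_d'(p)=\tfrac12\bigl(\ln d-\ln p-\ln(1-p/d)+2\ln(1-p)\bigr)$ yields
$$g'(p)\ =\ \frac{1}{2}\ln\!\left(\frac{1}{t}\cdot\frac{p}{d}\Bigl(1-\frac{p}{d}\Bigr)\frac{1}{(1-p)^2}\right).$$
Hence $g'(p)<0$ is equivalent to $t(1-p)^2>\frac{p}{d}(1-p/d)$, and the right-hand side never exceeds $\tfrac14$ on $[0,1]$. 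It therefore suffices to drive $t(1-p)^2$ above $\tfrac14$.

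I would evaluate at $t_0=1/\gamma_1(H_i)$ and let $p_0=p(H_i,t_0)\in(0,1)$. With $N_i=v(H_i)/2$ and the factorization $M(H_i,t)=\prod_{j=1}^{N_i}(1+\gamma_j(H_i)\,t)$ one computes
$$1-p_0\ =\ \frac{1}{N_i}\sum_{j=1}^{N_i}\frac{1}{1+\gamma_j(H_i)\,t_0}\ \ge\ \frac{1}{N_i}\cdot\frac{1}{1+\gamma_1(H_i)\,t_0}\ =\ \frac{1}{2N_i},$$
so
$$t_0(1-p_0)^2\ \ge\ \frac{1}{4N_i^{\,2}\,\gamma_1(H_i)}\ \ge\ \frac{c^{-2N_i}}{4N_i^{\,2}},$$
which tends to infinity with $i$ and in particular exceeds $\tfrac14$ for all sufficiently large $i$. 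After discarding finitely many initial terms, $g'(p_0)<0$ holds at $p_0=p(H_i,t_0)$ for every member of the sequence.

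The only real technical hurdle is quantitative bookkeeping: the polynomial-in-$n$ losses (the factor $d(dn+1)$ from Proposition~\ref{construction2} and the factor $N_i^{\,2}$ in the estimate for $t_0(1-p_0)^2$) must be absorbed into a slightly weakened exponential base $c<1$. This is straightforward, since both losses are polynomial while the relevant quantities decay or grow exponentially.
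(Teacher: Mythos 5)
Your proposal is correct and follows essentially the same route as the paper: feed the graphs of Theorem~\ref{construction} (via the edge $f$ incident to $e$ with $p(f)<c_0^n$) into Proposition~\ref{construction2}, bound $\gamma_1$ through $s(H)$ and Proposition~\ref{trivial}, and then make $g'(p_0)<0$ by evaluating $t(1-p)^2\ge 1/(4N^2\gamma_1)$ at $t_0=1/\gamma_1$. The only (harmless) difference is that you use the sharper bound $q(1-q)\le\tfrac14$ where the paper settles for $q(1-q)\le 1$.
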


\begin{proof} By Theorem~\ref{construction}, there exists a sequence of bipartite $d$--regular graphs $(G_i)$ with some edge $f_i$ for which $p(f_i)<c_1^n$, where $c_1<1$ only depends on $d$. This shows that for the graphs $H_i=G_i^*$ constructed in Proposition~\ref{construction2} we have $s(H_i)>C_1^n$ for large enough $n$, where $C_1>1$ only  depends on $d$.  By Proposition~\ref{trivial} this shows that $\gamma_1(H_i)<c_2^{v(H_i)}$.
\medskip

Let $H=H_i$ on $n$ vertices. We have seen in the proof of Theorem~\ref{stability} that
$$g'(p)=\frac{1}{2}\ln \left( \frac{q(1-q)}{t\cdot (1-d\cdot q)^2}\right),$$
where $q=p/d$. Since $q(1-q)\leq 1$, it is enough to show that for some $t_0$ we have
$$t_0(1-p_0)^2>1.$$
It turns out that in fact $t(1-p)^2$ can be arbitrarily large. Indeed, we have seen in the proof of Theorem~\ref{zero-estimation} that
$$t(1-p)^2=t\left(\frac{1}{n}\sum_{i=1}^{n}\frac{1}{1+\gamma_it}\right)^2\geq \frac{t}{n^2(1+\gamma_1t)^2}.$$
If we choose $t_0=1/\gamma_1$, we see that
$$t_0(1-p_0)^2\geq \frac{1}{4n^2\gamma_1}.$$
Since $\gamma_1$ can be as small as $c^n$, we see that $t_0(1-p_0)^2$ can be arbitrarily large.
\end{proof}

\end{document}